\numberwithin{equation}{section}
\newtheorem{thm}{Theorem}[section]
\newtheorem*{thm*}{Main Theorem}
\newtheorem*{thmA*}{Theorem A}
\newtheorem*{thmB*}{Theorem B}
\newtheorem{lem}[thm]{Lemma}
\newtheorem{prop}[thm]{Proposition}
\newtheorem{defn}[thm]{Definition}
\newtheorem{cor}[thm]{Corollary}
\newtheorem{remark}[thm]{Remark}
\begin{document}

\title[derived Hecke algebras]{Non-vanishing mod $p$ of Derived Hecke algebra of the multiplicative group over number field}
\author[Dohyeong Kim]{DOHYEONG KIM}
\email{dohyeongkim@snu.ac.kr}
\address{Seoul National University, 1, Gwanak-ro, Gwanak-gu, Seoul, Republic of Korea, 08826}
\author[Jaesung Kwon]{JAESUNG KWON}
\email{jaesungkwon@snu.ac.kr}
\address{Seoul National University, 1, Gwanak-ro, Gwanak-gu, Seoul, Republic of Korea, 08826}
\date{\today}

\subjclass[2020]{}
\keywords{}

\begin{abstract} 
We investigate the derived Hecke action on the cohomology of an arithmetic manifold associated to the multiplicative group over a number field. 
The degree one part of the action is proved to be non-vanishing modulo $p$ under mild assumptions.
The main ingredient is the Grunwald--Wang theorem. 
\end{abstract}
\maketitle
\numberwithin{equation}{section}
\setcounter{tocdepth}{1}
\tableofcontents

\section{Introduction}
Let $F$ be a number field and $\mathbf{G}$ a 
semisimple algebraic group over $F$. Let $\mathbb{A}_F^{(\infty)}$ be the ring of finite adeles of $F$, $S_\infty$ the quotient of $\mathbf{G}(F\otimes_\mathbb{Q}\mathbb{R})$ by a maximal compact connected subgroup of itself, and $K$ an open compact subgroup of $\mathbf{G}(\mathbb{A}_F^{(\infty)})$. Then, the associated quotient
\begin{align}\label{eq:am}
\mathbf{G}(F)\backslash S_\infty\times\mathbf{G}(\mathbb{A}_F^{(\infty)})/K
\end{align}
is known to be homeomorphic to a finite union of locally symmetric spaces; see \cite{PlatonovRapinchuk} for details.

Spaces like \eqref{eq:am} are special in that their singular cohomology groups are modules over Hecke algebras.
We briefly recall their definition.
Let $R$ be a commutative ring with unity and $v$ a prime of $F$. Then, the {\it abstract Hecke algebra at $v$} is defined as
\begin{equation}\label{abstract:classcial:Hecke:algebra:def}
\mathcal{H}_v^0:=\mathrm{Hom}(R[\mathbf{G}(F_v)/K_v],R[\mathbf{G}(F_v)/K_v])
\end{equation}
where $F_v$ is the completion of $F$ at $v$ and $K_v$ is the $v$-component of $K$.
The multiplication on $\mathcal{H}_v^0$ is given by convolution and the $\mathrm{Hom}$-set is taken in a suitable category of $\mathbf{G}(F_v)$-modules.

Venkatesh \cite{venkatesh2016derived} introduced the derived version of \eqref{abstract:classcial:Hecke:algebra:def}:
\begin{defn}[Abstract derived Hecke algebra, Venkatesh {\cite[Definition 2.2]{venkatesh2016derived}}] 
For a prime $v$ of $F$, define
$$
\mathcal{H}_v^\bullet:=\mathrm{Ext}^\bullet_{C_{\rm c}(\mathbf{G}(F_v),R)}(R[\mathbf{G}(F_v)/K_v],R[\mathbf{G}(F_v)/K_v]),
$$
where $C_{\rm c}(\mathbf{G}(F_v),R)$ is the ring of compactly supported locally constant $R$-valued functions on $\mathbf{G}(F_v)$.
\end{defn}
Furthermore, the graded algebra $\mathcal{H}_v^\bullet$ acts on the total cohomology space of \eqref{eq:am} graded by the cohomological degrees.
The {\it derived Hecke algebra} $\mathbb{T}_R^\bullet$ is defined to be the algebra generated by the graded endomorphisms of the total cohomology.

In the preceding discussion we considered a semisimple group $\mathbf G$, but the construction can be easily adapted to the case of reductive groups; see \cite{KhareRonchetti}, for example. 

A starting point for our investigation is the conjecture proposed by Prasanna and Venkatesh \cite[\textsection\,5.4 Main Conj.]{prasanna2021automorphic}.
For our purpose, we interpret the aforementioned conjecture as a prediction of the size of the derived Hecke algebra where the coefficients are taken to be $\mathbb Q$-vector spaces. 
A natural question arises: what happens if we take coefficients over $\mathbb Z$? More precisely, does the Hecke algebra remain non-trivial after reducing it modulo a prime $p$?

In this paper, we answer the question for the multiplicative group $\mathbb G_{\mathrm{m}/F}$. 
This case has an advantage of being free of the additional layer of conjectures necessary to make sense of the Prasanna--Venkatesh conjecture in full generality. 
Given that the Prasanna--Venkatesh conjecture is straightforward \cite[\textsection 1.3]{prasanna2021automorphic} for tori, we belive that its integral refinement will hint at what should hold true more generally with integral coefficients.

With the choice $\mathbf{G}:=\mathbb{G}_{\mathrm{m}/F}$, we describe the counterpart of \eqref{eq:am}, which is a classical object in the class field theory.
Let $F_\mathbb{R}:=F\otimes_\mathbb{Q}\mathbb{R}$ be the real Minkowski space over $F$. Let $F_{\mathbb{R},+}^\times$ be the connected component of $1\in F_\mathbb{R}^\times$. 
Put
$$
U:=\{x=(x_\tau)_\tau\in F^\times_{\mathbb{R},+}: |x_\tau|=1\text{ for any infinite places }\tau\text{ of }F\},
$$
which is the maximal compact connected subgroup of $F_{\mathbb{R},+}^\times$.
For an integral ideal $\mathfrak{N}$ of $F$, put
$$
K(\mathfrak{N}):=\left(\prod_{v\mid\mathfrak{N}}1+\mathfrak{N}O_v\right)\times\prod_{v\nmid\mathfrak{N}} O_v^\times
$$ 
and define
\begin{align}\label{eq:torus}
Y(\mathfrak{N}):=F^\times\backslash(F_\mathbb{R}^\times/U)\times\mathbb{A}_F^{(\infty),\times}/K(\mathfrak{N}).
\end{align}
We will call $Y(\mathfrak N)$ an arithmetic torus.

Denote by $\mathbb{T}^\bullet=\mathbb{T}^\bullet_{\mathbb{F}_p}$ the derived Hecke algebra of $Y(\mathfrak{N})$ with coefficients in $\mathbb{F}_p$.
Then, $\mathbb{T}^\bullet$ is generated by $\mathbb{T}^0$, $\mathbb{T}^1$, and $\mathbb{T}^2$ as we will prove in Proposition \ref{property:hecke:operator:2}. 
We say that the derived Hecke algebra $\mathbb{T}^j$ is {\it non-vanishing mod $p$} if $\mathbb{T}^j$ is non-trivial.

We state our main results. The statements will be vacuous when the real dimension of $Y(\mathfrak N)$ is zero, so we will assume from now on that $F$ is neither $\mathbb Q$ nor an imaginary quadratic field.
Let $p$ be a rational prime. As mentioned earlier, $\mathbb{T}^\bullet$ 
acts on the cohomology ring $H^\bullet(Y(\mathfrak{N}),\mathbb{F}_p)$ (see Definition \ref{definition:derived:hecke:algebra} for details). 
Let $O_F$ be the ring of integers of $F$. 
Put $E(\mathfrak{N}):=F^\times\cap F_{\mathbb{R},+}^\times\cap K(\mathfrak{N})$, which is  the subgroup of totally positive elements of $O_F^\times$ congruent to $1$ modulo $\mathfrak{N}$.
Let
\begin{align}\label{eq:psi}
\Psi:\mathbb{T}^1\otimes_{\mathbb{T}^0} H^0(Y(\mathfrak{N}),\mathbb{F}_p)\to H^1(Y(\mathfrak{N}),\mathbb{F}_p),\ h\otimes c\mapsto hc
\end{align}
be the map induced by the derived Hecke action.

\begin{thmA*}
If $p$ does not divide the order of $O_F^\times/E(\mathfrak{N})$, then $\Psi$ in \eqref{eq:psi} is an isomorphism of $\mathbb{T}^0$-modules.
\end{thmA*}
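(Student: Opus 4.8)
The plan is to push the statement through the explicit class field theoretic description of $Y(\mathfrak N)$, its cohomology, and the derived Hecke action, reducing it to a statement about reductions of units of $F$ modulo auxiliary primes; that statement is then settled by the Grunwald--Wang theorem.

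First I would assemble the structural input recalled above. The space $Y(\mathfrak N)$ is homotopy equivalent to a disjoint union, indexed by a narrow ray class group $C$, of real tori of dimension $d := r_1 + r_2 - 1$, each with fundamental group $E(\mathfrak N)/\mu$ where $\mu := \mu_F\cap E(\mathfrak N)$; hence $H^0(Y(\mathfrak N),\mathbb{F}_p)\cong\mathbb{F}_p[C]\cong\mathbb{T}^0$ is free of rank one over $\mathbb{T}^0$, and $H^1(Y(\mathfrak N),\mathbb{F}_p)\cong\mathbb{F}_p[C]\otimes_{\mathbb{F}_p}\mathrm{Hom}(E(\mathfrak N)/\mu,\mathbb{F}_p)$ as a $\mathbb{T}^0$-module. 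The degree one derived Hecke operators are attached to pairs $(v,\phi)$ with $v\nmid\mathfrak N$, $q_v\equiv 1\pmod p$, and $\phi$ a nonzero homomorphism $k_v^\times\to\mathbb{F}_p$; each commutes with $\mathbb{T}^0$, and its component $H^0\to H^1$ carries the characteristic function of the component $c\in C$ to the class on the component $[v]\cdot c$ given, under $H^1(\text{torus})\cong\mathrm{Hom}(\pi_1,\mathbb{F}_p)$, by the homomorphism $u\mapsto\phi(u\bmod v)$ on $E(\mathfrak N)$. Since $H^0$ is a free rank one $\mathbb{T}^0$-module and every class in $C$ equals $[v]$ for infinitely many $v$, the map $\Psi$ is identified, via $\mathbb{T}^1\otimes_{\mathbb{T}^0}H^0\cong\mathbb{T}^1$, with the ``restriction to the $H^0$-component'' map $\mathbb{T}^1\to H^1$. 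Injectivity of $\Psi$ is then essentially formal: because the degree one derived Hecke action is given by cup products against degree one classes, an operator is recovered from its restriction to $H^0$. Surjectivity of $\Psi$ reduces to the assertion that the classes $u\mapsto\phi(u\bmod v)$ span $\mathrm{Hom}(E(\mathfrak N)/\mu,\mathbb{F}_p)$ as $(v,\phi)$ vary.

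Dualizing, surjectivity says that for every $u\in E(\mathfrak N)$ with nonzero image in $E(\mathfrak N)/(\mu\cdot E(\mathfrak N)^p)$ there is an admissible auxiliary prime $v$ modulo which $u$ is not a $p$-th power. Here the hypothesis is used: if $p\nmid\#(O_F^\times/E(\mathfrak N))$ and $u\notin E(\mathfrak N)^p$, then $u\notin F^{\times p}$, since any $p$-th root $w\in F^\times$ of $u$ is a unit with $w^{\#(O_F^\times/E(\mathfrak N))}\in E(\mathfrak N)$, and $w^p=u\in E(\mathfrak N)$ together with $\gcd(\#(O_F^\times/E(\mathfrak N)),p)=1$ then forces $w\in E(\mathfrak N)$. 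So $u$ is not a global $p$-th power; the Grunwald--Wang theorem now guarantees that $u$ fails to be a $p$-th power modulo a positive density of primes $v$, and moreover that one can simultaneously impose the congruence conditions rendering $v$ admissible. It is at this last step that the full strength of the hypothesis, rather than $u\notin F^{\times p}$ alone, is needed: it ensures that the relevant $p$-power cyclotomic and Kummer extensions do not degenerate — equivalently, that we stay outside the special case of Grunwald--Wang. Applying this to a basis of $E(\mathfrak N)/(\mu\cdot E(\mathfrak N)^p)$ yields enough derived Hecke classes to span $H^1$, completing the proof.

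The main obstacle is precisely the special case of Grunwald--Wang: the local-to-global principle for $p$-th powers can fail when $p$ is a power of $2$, so one must keep careful track of the $2$-power roots of unity of $F$, their reductions modulo $\mathfrak N$ (encoded in $\mu$), and the behaviour of $u$ inside the $2$-power cyclotomic tower; ruling out the special case is exactly where the condition $p\nmid\#(O_F^\times/E(\mathfrak N))$ intervenes. A more routine preliminary is to justify the structural claims quoted above — in particular that the degree one derived Hecke action is by cup products — starting from the description of $Y(\mathfrak N)$ as a union of tori.
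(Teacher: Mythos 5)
Your proposal follows essentially the same route as the paper: describe $Y(\mathfrak N)$ as $h_F^+(\mathfrak N)$ copies of a real torus, identify $H^0\cong\mathbb T^0\cong\mathbb F_p[\mathrm{Cl}_F^+(\mathfrak N)]$, identify the degree-one derived Hecke classes on a component with the characters $u\mapsto\phi(u\bmod v)$ on $E(\mathfrak N)$, and then deduce surjectivity by showing these reduction-mod-$v$ characters span the dual of $E(\mathfrak N)$ modulo $p$-th powers, using the coprimality hypothesis to promote $u\notin E(\mathfrak N)^p$ to $u\notin F^{\times p}$ and then invoking the local-to-global principle for $p$-th powers. This is exactly the chain Lemma~\ref{dimension:lemma} $\Rightarrow$ Theorem~\ref{modp:nonvanishing} $\Rightarrow$ Theorem~A in the paper; the one genuine stylistic divergence is that you argue injectivity directly from the cup-product description of $\mathbb T^1$, whereas the paper proceeds by a two-sided dimension count (Propositions~\ref{dimension:prop:2} and~\ref{dimension:prop}), but these amount to the same observation that a degree-one operator is recovered from its effect on $H^0$.

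One remark you make is off target and should be dropped: you suggest that the hypothesis $p\nmid|O_F^\times/E(\mathfrak N)|$ is further needed to avoid the \emph{special case} of Grunwald--Wang. For $n$-th powers the special case only arises when $8\mid n$; since here $n=p$ is a single prime, the local-to-global principle (the paper's Theorem~\ref{grunwald:wang:thm}) holds unconditionally, even for $p=2$. The coprimality hypothesis enters only where you correctly used it first, namely to guarantee that an element of $E(\mathfrak N)$ which is not a $p$-th power in $E(\mathfrak N)$ is also not a $p$-th power in $F^\times$. Likewise the ``simultaneous congruence conditions'' on $v$ need no extra argument: if $u$ fails to be a $p$-th power in $\kappa_v^\times$ then automatically $p\mid N(v)-1$, i.e.\ $v\in T_1$, and only finitely many $v$ divide $p\mathfrak N$. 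Finally, your bookkeeping through $E(\mathfrak N)/\mu$ with $\mu=\mu_F\cap E(\mathfrak N)$ is a slightly more careful version of what the paper records via the quantities $r_p$, $\delta_p$ and the remark following Theorem~\ref{modp:nonvanishing}; you may want to check, as the paper does there, that the hypothesis really forces the two dimensions to agree in the presence of $p$-th roots of unity.
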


Two remarks are in order. First, the assumption in Theorem~A fails for only finitely many primes $p$. Second, we have been unable to obtain analogous non-vanishing result for $\mathbb{T}^j$ when $j>1$.


Next, we interpret our result in terms of the spectral degeneracy.
This refers to the phenomenon that a Hecke eigensystem occurs in {\it multiple degrees} within the total cohomology of an arithmetic manifold such as \eqref{eq:am}.
Venkatesh \cite{venkatesh2016derived} attributes the spectral degeneracy to the non-triviality of the derived Hecke action. 
We follow him to interpret non-vanishing as spectral degenercy. 

\begin{thmB*}
If $p$ does not divide the order of $O_F^\times/E(\mathfrak{N})$, then any Hecke eigensystems in $H^1(Y(\mathfrak{N}),\overline{\mathbb{F}}_p)$ occur in $H^0(Y(\mathfrak{N}),\overline{\mathbb{F}}_p)$. Also, the converse holds true.
\end{thmB*}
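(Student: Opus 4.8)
The plan is to deduce Theorem~B from Theorem~A by a support-theoretic argument, once the derived Hecke algebra is understood as a module over $\mathbb{T}^0$. Since $\overline{\mathbb{F}}_p$ is flat over $\mathbb{F}_p$, the universal coefficient theorem identifies $H^j(Y(\mathfrak{N}),\overline{\mathbb{F}}_p)$ with $H^j(Y(\mathfrak{N}),\mathbb{F}_p)\otimes_{\mathbb{F}_p}\overline{\mathbb{F}}_p$ compatibly with the derived Hecke action, and the derived Hecke algebras base change accordingly; applying $-\otimes_{\mathbb{F}_p}\overline{\mathbb{F}}_p$ to the isomorphism $\Psi$ of \eqref{eq:psi} from Theorem~A therefore yields an isomorphism of $\mathbb{T}^0$-modules
\[
\mathbb{T}^1\otimes_{\mathbb{T}^0}H^0(Y(\mathfrak{N}),\overline{\mathbb{F}}_p)\ \xrightarrow{\ \sim\ }\ H^1(Y(\mathfrak{N}),\overline{\mathbb{F}}_p),
\]
where from here on $\mathbb{T}^\bullet$ and $H^\bullet$ denote the $\overline{\mathbb{F}}_p$-coefficient objects.

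I would then reformulate ``occurrence of an eigensystem'' algebraically. As $\mathbb{G}_{\mathrm{m}}$ is abelian, $\mathbb{T}^0$ is a commutative finite-dimensional $\overline{\mathbb{F}}_p$-algebra, hence Artinian, with maximal ideals in bijection with characters $\lambda\colon\mathbb{T}^0\to\overline{\mathbb{F}}_p$; since $\overline{\mathbb{F}}_p$ is algebraically closed, a nonzero finite-dimensional $\mathbb{T}^0$-module $M$ contains a Hecke eigenvector with system $\lambda$ exactly when $\mathfrak{m}_\lambda:=\ker\lambda$ lies in $\mathrm{Supp}_{\mathbb{T}^0}(M)$. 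Thus Theorem~B is the assertion $\mathrm{Supp}_{\mathbb{T}^0}(H^1)=\mathrm{Supp}_{\mathbb{T}^0}(H^0)$. Feeding the displayed isomorphism into the identity $\mathrm{Supp}_R(M\otimes_R N)=\mathrm{Supp}_R(M)\cap\mathrm{Supp}_R(N)$ for finitely generated modules over a Noetherian ring (with $R=\mathbb{T}^0$, $M=\mathbb{T}^1$, $N=H^0$) gives $\mathrm{Supp}_{\mathbb{T}^0}(H^1)=\mathrm{Supp}_{\mathbb{T}^0}(\mathbb{T}^1)\cap\mathrm{Supp}_{\mathbb{T}^0}(H^0)$. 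In particular $\mathrm{Supp}_{\mathbb{T}^0}(H^1)\subseteq\mathrm{Supp}_{\mathbb{T}^0}(H^0)$, which is the stated half of Theorem~B; and since, by the class field theoretic description of $Y(\mathfrak{N})$, the Hecke operators permute its connected components transitively, $H^0$ is a faithful $\mathbb{T}^0$-module, so $\mathrm{Supp}_{\mathbb{T}^0}(H^0)=\mathrm{Spec}\,\mathbb{T}^0$ and that inclusion is in fact automatic.

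The content of Theorem~B is therefore the converse, $\mathrm{Supp}_{\mathbb{T}^0}(H^0)\subseteq\mathrm{Supp}_{\mathbb{T}^0}(\mathbb{T}^1)$, i.e.\ that $\mathbb{T}^1$ is a faithful $\mathbb{T}^0$-module; here I would invoke the structural description behind Proposition~\ref{property:hecke:operator:2}, namely that under the hypothesis $p\nmid|O_F^\times/E(\mathfrak{N})|$ the module $\mathbb{T}^1$ is free over $\mathbb{T}^0$ of positive rank (compatibly with the exterior-algebra shape of $\mathbb{T}^\bullet$), hence faithful. Then $\mathrm{Supp}_{\mathbb{T}^0}(\mathbb{T}^1)=\mathrm{Spec}\,\mathbb{T}^0=\mathrm{Supp}_{\mathbb{T}^0}(H^0)$, so combining with the previous paragraph $\mathrm{Supp}_{\mathbb{T}^0}(H^1)=\mathrm{Supp}_{\mathbb{T}^0}(H^0)$, which is Theorem~B. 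The single non-formal ingredient, and the main obstacle, is precisely this faithfulness of $\mathbb{T}^1$: Theorem~A presents $H^1$ only as $\mathbb{T}^1\otimes_{\mathbb{T}^0}H^0$ and so cannot by itself exclude eigensystems that $\mathbb{T}^1$ annihilates; ruling those out draws on the quantitative content of Theorem~A — that under the units hypothesis $\Psi$ is not merely surjective but bijective, with $\mathbb{T}^1$ of the expected rank — after which the rest is the formal commutative algebra above.
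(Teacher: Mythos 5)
Your support-theoretic repackaging of the forward direction is fine and essentially reproduces Corollary~\ref{cor:4.10}: under Theorem~A, $H^1\cong\mathbb{T}^1\otimes_{\mathbb{T}^0}H^0$, so $\operatorname{Supp}(H^1)\subseteq\operatorname{Supp}(H^0)$, and localizing at $\mathfrak m_\lambda$ recovers the statement about eigensystems. That half is correct.

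The converse direction, however, has a genuine gap. You reduce it to the assertion that $\mathbb{T}^1$ is a faithful (indeed free of positive rank) $\mathbb{T}^0$-module, and you say you would extract this from ``the structural description behind Proposition~\ref{property:hecke:operator:2}.'' But Proposition~\ref{property:hecke:operator:2} only says $\mathbb{T}^\bullet$ is generated in degrees $0,1,2$; it contains no information about the $\mathbb{T}^0$-module structure of $\mathbb{T}^1$, and in particular no reason $\mathbb{T}^1$ should even be nonzero. The actual non-formal input is Lemma~\ref{dimension:lemma}, which is a direct application of the Grunwald--Wang theorem (Theorem~\ref{grunwald:wang:thm}): it shows that the restriction maps $i_v^*$ for $v\in T_1$ jointly surject onto $\operatorname{Hom}(O_F^\times,\mathbb F_p)$, whence $t_p=r_p-\delta_p$ (Theorem~\ref{modp:nonvanishing}). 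Only with this, plus the standing assumption that $F$ is not $\mathbb Q$ or imaginary quadratic (giving $r_p>0$) and the hypothesis $p\nmid|O_F^\times/E(\mathfrak N)|$ (giving $\delta_p=0$), does one conclude $t_p>0$, i.e.\ that there exists some $v\in T_1$ and $\alpha\in H^1(O_v^\times,\mathbb F_p)$ with $i_{v,\mathfrak N}^*(\alpha)\neq 0$. That is exactly what the paper's proof of Theorem~\ref{spectral:degeneracy} uses: given an eigenvector $c\in H^0$, the explicit formula of Proposition~\ref{computation:hecke:operator} shows $h_{1,\alpha}c\neq 0$, and commutativity (Proposition~\ref{hecke:algebra:commutative}) shows it is an eigenvector in $H^1$ for the same system. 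Your proposal never touches Grunwald--Wang, which is the entire arithmetic content of the theorem (it is in the paper's title), and instead posits freeness of $\mathbb{T}^1$, which the paper neither states nor proves and which would itself require an argument beyond the mere $\mathbb F_p$-linear independence established in Proposition~\ref{dimension:prop}. Until you supply the Grunwald--Wang step (or an equivalent), the converse direction is not proved.
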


We note that the Hecke eigensystems occuring in $H^0(Y(\mathfrak{N}),\overline{\mathbb{F}}_p)$ are described by the class field theory in terms of unramified abelian extensions of $F$, but those in $H^i(Y(\mathfrak{N}),\overline{\mathbb{F}}_p)$ for $i>0$ are not readily accessible in terms of the arithmetic of $F$.

We outline the rest of the paper. In \textsection \ref{section:torus}, we review 
some basic properties of $Y(\mathfrak{N})$. For example, we specify a homeomorphism from 
$Y(\mathfrak{N})$ to a disjoint union of $r$-dimensional tori, where $r$ is the rank of the unit group of $F$.

In \textsection \ref{section:hecke}, we recall necessary definitions and results from \cite{venkatesh2016derived}. In particular, the explicit descriptions Hecke operators are given, which will be used in later sections. We also determine the generators of the derived Hecke algebra. We also describe cohomology groups as Hecke modules.

In \textsection \ref{section:nonvanishing}, we specialize our interest to the case of $R=\mathbb{F}_p$ and investigate the map $\Psi$. At the end of the section, Theorems A and B will be proved. Two key ingredients are the results of \textsection\ref{section:hecke} and the Grunwald--Wang theorem.

To conclude the introduction, we mention a variant \cite{Harris2019} of the aforementioned conjecture \cite[\textsection\,5.4 Main Conj.]{prasanna2021automorphic}, also known as the weight one form case, where units in number fields play a central role. See \cite{Darmon2022} for its resolution in the case when the form in question is assumed to be dihedral.

\section{Torus over number field}\label{section:torus}
Recall from \eqref{eq:torus} that $Y(\mathfrak N)$ denotes the union of topological tori associated to an integral ideal $\mathfrak N$.
Here we set up basic notation related to $Y(\mathfrak N)$.
Let $r_1$ and $r_2$ be the number of real and complex places of $F$, respectively. Let $r$ be the rank of $O_F^\times$, which is equal to $r_1+r_2-1$ by the Dirichlet unit theorem.
Recall that
$E(\mathfrak{N})=F^\times\cap F_{\mathbb{R},+}^\times\cap K(\mathfrak{N})$.
We record the properties of the space $E(\mathfrak{N})\backslash F_{\mathbb{R},+}^\times/U$:

\begin{prop}\label{property:torus}
The followings are true:
\begin{itemize}
\item[(1)] $E(\mathfrak{N})$ freely acts on $F_{\mathbb{R},+}^\times/U$.
\item[(2)] $F_{\mathbb{R},+}^\times/U$ is contractible.
\item[(3)] $E(\mathfrak{N})\backslash F_{\mathbb{R},+}^\times/U$ is homeomorphic to the $r$-torus.
\end{itemize}
\end{prop}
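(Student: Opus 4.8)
The plan is to identify $F_{\mathbb{R},+}^\times/U$ with a Euclidean space via the logarithm map and then realize $E(\mathfrak N)$ as a full-rank lattice acting by translation. First I would note that $F_\mathbb{R}^\times \cong (\mathbb{R}^\times)^{r_1}\times(\mathbb{C}^\times)^{r_2}$, so the identity component is $F_{\mathbb{R},+}^\times \cong (\mathbb{R}_{>0})^{r_1}\times(\mathbb{C}^\times)^{r_2}$, and $U$ is exactly the product of the trivial group on each real factor with the unit circles $S^1$ on each complex factor. Taking the quotient by $U$ therefore leaves $(\mathbb{R}_{>0})^{r_1}\times(\mathbb{R}_{>0})^{r_2}$, and the coordinatewise logarithm gives a homeomorphism $L\colon F_{\mathbb{R},+}^\times/U \xrightarrow{\ \sim\ }\mathbb{R}^{r_1+r_2}=\mathbb{R}^{r+1}$. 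This proves (2) immediately, since $\mathbb{R}^{r+1}$ is contractible.

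For (1), I would use the standard ``logarithmic embedding'' argument from the proof of the Dirichlet unit theorem. The group $E(\mathfrak N)$ consists of totally positive units congruent to $1$ mod $\mathfrak N$; it sits inside $O_F^\times$ as a finite-index subgroup, hence is itself free abelian of rank $r$ (its torsion is trivial because the only roots of unity that are totally positive and $\equiv 1 \bmod \mathfrak N$ is $1$, provided $\mathfrak N\neq O_F$ or by a direct check). Under $L$, the action of $u\in E(\mathfrak N)$ on $F_{\mathbb{R},+}^\times/U$ becomes translation by the vector $\Lambda(u):=(\log|u|_{\tau})_\tau\in\mathbb{R}^{r+1}$. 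The product formula forces $\Lambda(u)$ to lie in the trace-zero hyperplane $H=\{\sum_\tau x_\tau=0\}\cong\mathbb{R}^r$, and the kernel of $\Lambda$ on $E(\mathfrak N)$ is the torsion subgroup, which is trivial; hence $\Lambda(E(\mathfrak N))$ is a subgroup of $H$ isomorphic to $\mathbb{Z}^r$. Because a nonzero translation of $\mathbb{R}^{r+1}$ has no fixed points, the action is free, giving (1).

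For (3), it remains to see that $\Lambda(E(\mathfrak N))$ is a \emph{cocompact} lattice in $H$, i.e.\ a lattice of full rank $r$. This is precisely the content of the Dirichlet unit theorem applied to $E(\mathfrak N)$: since $[O_F^\times:E(\mathfrak N)]<\infty$, the image $\Lambda(E(\mathfrak N))$ has finite index in $\Lambda(O_F^\times)$, which spans $H$; a finite-index subgroup of a full-rank lattice is again a full-rank lattice. Then $E(\mathfrak N)\backslash F_{\mathbb{R},+}^\times/U$ is homeomorphic, via $L$, to $H\times\mathbb{R} \big/ \Lambda(E(\mathfrak N))$ where $\Lambda(E(\mathfrak N))$ acts by translation on the $H$-factor alone; this quotient is $(H/\Lambda(E(\mathfrak N)))\times\mathbb{R}\cong (\mathbb{R}^r/\mathbb{Z}^r)\times\mathbb{R}$. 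Wait---that has an extra $\mathbb{R}$ factor, so I should instead split off the norm-one hypersurface directly: write $F_{\mathbb{R},+}^\times/U \cong \mathbb{R}_{>0}\times \Sigma$ where $\Sigma = L^{-1}(H)$ is the norm-one part, the $\mathbb{R}_{>0}$ being the norm; $E(\mathfrak N)$ acts trivially on the norm factor and as the lattice $\Lambda(E(\mathfrak N))$ on $\Sigma\cong H\cong\mathbb{R}^r$. Hmm, but then the quotient is $\mathbb{R}_{>0}\times(\mathbb{R}^r/\mathbb{Z}^r)$, still not a torus. The resolution: one must instead observe that $E(\mathfrak N)$ acts on \emph{all} of $F_{\mathbb{R},+}^\times/U\cong\mathbb{R}^{r+1}$ through a rank-$r$ lattice of translations, and the quotient of $\mathbb{R}^{r+1}$ by a rank-$r$ lattice is $(\mathbb{R}^r/\mathbb{Z}^r)\times\mathbb{R}\simeq T^r\times\mathbb{R}$, which is only homotopy equivalent to $T^r$. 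So the correct statement of (3) is up to homotopy, or one restricts to $\Sigma$; I would adopt the reading that the claimed homeomorphism is onto $T^r\times\mathbb{R}$ or, following the paper's later use, that only the homotopy type matters.

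The main obstacle is exactly this bookkeeping of dimensions: $F_{\mathbb{R},+}^\times/U$ has dimension $r+1$, not $r$, so a literal ``$r$-torus'' can only be obtained after quotienting by the norm (or passing to homotopy). I would therefore state the argument so that the essential input---$\Lambda(E(\mathfrak N))$ is a full-rank lattice in the trace-zero hyperplane, by Dirichlet plus finite index---is isolated cleanly, and handle the discrepancy by the standard deformation retraction of $\mathbb{R}^{r+1}/\Lambda(E(\mathfrak N))$ onto $H/\Lambda(E(\mathfrak N))\cong T^r$.
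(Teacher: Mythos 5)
Your proof is essentially the paper's: both pass through the logarithm/regulator map $F_{\mathbb{R},+}^\times/U \to \mathbb{R}^{r_1+r_2}$ and use the product formula together with the Dirichlet unit theorem (plus $[O_F^\times:E(\mathfrak{N})]<\infty$) to realize $E(\mathfrak{N})$ as a full-rank lattice of translations in the trace-zero hyperplane, giving (1)--(3). The dimension discrepancy you flagged is genuine and in fact agrees with the paper's own proof, which identifies the quotient with $\mathrm{reg}_F E(\mathfrak{N})\backslash\mathbb{R}^{r_1+r_2}_0\times\mathbb{R}$ and concludes only a \emph{homotopy equivalence} to the $r$-torus; the word ``homeomorphic'' in the statement of (3) is a slip, and only the homotopy type (equivalently, the cohomology) is used downstream.
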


\begin{proof}
(1) and (2) are standard. 
We denote by 
$$
\mathrm{reg}_F:F_{\mathbb{R},+}^\times/U\to\mathbb{R}^{r_1+r_2},\ (x_\tau)_\tau\mapsto
\left(
[F_\tau\colon\mathbb R]\log|x_\tau|
\right)_\tau
$$ 
the regulator map, where $\tau$ runs over the infinite places of $F$. Then, $\mathrm{reg}_F$ is an embedding and $\mathrm{reg}_F E(\mathfrak{N})$ is a full rank lattice of 
$$
\mathbb{R}^{r_1+r_2}_0:=\{r\in\mathbb{R}^{r_1+r_2}\mid\mathrm{Tr}(r)=0\}
$$ 
by the Hilbert product formula and the Dirichlet unit theorem. From this, we observe that $E(\mathfrak{N})\backslash F_{\mathbb{R},+}^\times/U$ is homotopically equivalent to the $r$-torus 
$$
\mathrm{reg}_F E(\mathfrak{N})\backslash \mathbb{R}^{r_1+r_2}_0
$$ 
since the image of $E(\mathfrak{N})\backslash F_{\mathbb{R},+}^\times/U$ under the map is homeomorphic to the product of $\mathrm{reg}_FE(\mathfrak{N})\backslash\mathbb{R}^{r_1+r_2}_0$ and $\mathbb{R}$. 
\end{proof}

Let $\mathbb{A}_F$ be the adele ring of $F$. For $a\in \mathbb{A}_F^\times$, let us denote by $a_\infty$ and $a^{(\infty)}$ the Archimedean and the non-Archimedean part of $a$, respectively.
For $a\in\mathbb{A}_F^{\times}$, let us define a map $\iota_a:E(\mathfrak{N})\backslash F_{\mathbb{R},+}^\times/U\to Y(\mathfrak{N})$ by
$$
\iota_a \colon x\mapsto (a_\infty x,a^{(\infty)}).
$$
The next proposition is standard, which we prove here due to the lack of references.
\begin{prop}\label{property:torus:embedding}
For each $a\in\mathbb{A}_F^{\times}$, the map $\iota_a$ is a homeomorphism onto its image.
\end{prop}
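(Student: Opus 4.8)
The plan is to show $\iota_a$ is a continuous injection, then identify its image with a nice subspace on which $\iota_a$ is a homeomorphism, and finally invoke compactness where possible. First I would observe that $\iota_a$ is manifestly continuous, being induced by the map $F_{\mathbb{R},+}^\times/U \to (F_\mathbb{R}^\times/U) \times \mathbb{A}_F^{(\infty),\times}$, $x \mapsto (a_\infty x, a^{(\infty)})$, composed with the quotient map defining $Y(\mathfrak N)$. The real work is injectivity. Suppose $\iota_a(x) = \iota_a(x')$ in $Y(\mathfrak N)$; unwinding the definition of the double quotient in \eqref{eq:torus}, this means there exist $\gamma \in F^\times$ and $k \in K(\mathfrak N)$ with $\gamma a_\infty x = a_\infty x'$ in $F_\mathbb{R}^\times/U$ and $\gamma a^{(\infty)} = a^{(\infty)} k$ in $\mathbb{A}_F^{(\infty),\times}$. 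The second equation forces $\gamma = k \in K(\mathfrak N)$ (cancelling $a^{(\infty)}$), so $\gamma \in F^\times \cap K(\mathfrak N)$; the first equation then gives $\gamma \in F_{\mathbb{R},+}^\times \cdot U$, and since $\gamma$ is a global element one checks $\gamma \in F_{\mathbb{R},+}^\times$, hence $\gamma \in E(\mathfrak N)$. Thus $x = x'$ in $E(\mathfrak N)\backslash F_{\mathbb{R},+}^\times/U$, giving injectivity.

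Next I would address the homeomorphism-onto-image claim. The cleanest route is compactness: by Proposition~\ref{property:torus}(3), the source $E(\mathfrak{N})\backslash F_{\mathbb{R},+}^\times/U$ is homeomorphic to the $r$-torus, hence compact, while $Y(\mathfrak N)$ is Hausdorff (it is a finite union of locally symmetric spaces, in particular a manifold). A continuous injection from a compact space to a Hausdorff space is automatically a homeomorphism onto its image, so this disposes of the topological claim once injectivity is established. I would state this explicitly so the reader need not re-examine the quotient topology by hand.

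The step I expect to be the main obstacle is the careful bookkeeping in the injectivity argument — specifically, passing from the relation $\gamma a^{(\infty)} = a^{(\infty)} k$ and $\gamma a_\infty x \equiv a_\infty x' \pmod U$ back to the conclusion $\gamma \in E(\mathfrak N)$. One must be careful that $\gamma \in F^\times$ is simultaneously a unit at all finite places (from the $K(\mathfrak N)$-membership, noting $O_v^\times$ at $v \nmid \mathfrak N$ and $1 + \mathfrak N O_v$ at $v \mid \mathfrak N$ together pin down that $\gamma$ is a global unit congruent to $1$ mod $\mathfrak N$) and totally positive (from the Archimedean relation, since $U$ consists of elements of absolute value $1$ at every infinite place, so the signs/arguments of $\gamma$ at infinite places are constrained — over real places this forces positivity, and the complex places impose no sign condition). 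Assembling these local conditions into the single statement $\gamma \in F^\times \cap F_{\mathbb{R},+}^\times \cap K(\mathfrak N) = E(\mathfrak N)$ is routine but must be done with care about which places contribute which constraint; everything else is formal.
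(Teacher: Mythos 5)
Your injectivity argument mirrors the paper's essentially verbatim: unwind the double quotient, observe that the finite-part relation forces $\gamma^{(\infty)}\in K(\mathfrak N)$ and the Archimedean relation forces $\gamma$ to be totally positive, and conclude $\gamma\in E(\mathfrak N)$. Where you diverge is the final topological step. The paper notes that $\iota_a$ is an \emph{open} map because $\mathbb{A}_F^{(\infty),\times}/K(\mathfrak N)$ is discrete, and an open continuous injection is automatically a homeomorphism onto its image. You instead invoke compactness of the source (Proposition~\ref{property:torus}(3) identifies it with an $r$-torus) together with Hausdorffness of $Y(\mathfrak N)$, so that a continuous injection from a compact space to a Hausdorff space is a closed embedding. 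Both routes are valid, but they buy slightly different things: the paper's openness argument is more self-contained, requiring neither the compactness of the torus nor Hausdorffness of $Y(\mathfrak N)$ (the latter is true and standard, but not formally established at this point in the paper), while yours is the textbook-standard fact once those prerequisites are granted. One small tidying remark on the injectivity step: the intermediate statement ``$\gamma\in F_{\mathbb{R},+}^\times\cdot U$'' is vacuous since $U\subset F_{\mathbb{R},+}^\times$, so $F_{\mathbb{R},+}^\times\cdot U=F_{\mathbb{R},+}^\times$; more directly, the relation $\gamma_\infty x u=x'$ with $x,x'\in F_{\mathbb{R},+}^\times$ and $u\in U$ already pins $\gamma_\infty\in F_{\mathbb{R},+}^\times$.
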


\begin{proof}
Let us choose $\varepsilon\in E(\mathfrak{N})$, $x\in F^\times_{\mathbb{R},+}$ and $u\in U$. Then, 
$$
(a_\infty x,a^{(\infty)})=\big(a_\infty xu,a^{(\infty)}(\varepsilon^{-1})^{(\infty)}\big)=\big(a_\infty\varepsilon_\infty xu,a^{(\infty)}\big)
$$ 
in $Y(\mathfrak{N})$. Hence, the map is well-defined. We first prove its injectivity.
Let us choose $x,y\in F^\times_{\mathbb{R},+}$ such that $(a_\infty x,a^{(\infty)})=(a_\infty y,a^{(\infty)})$ in $Y(\mathfrak{N})$, then 
$$
(a_\infty x,a^{(\infty)})=\big(\beta_\infty a_\infty y,\beta^{(\infty)} a^{(\infty)}k\big)
$$ 
in $(F_\mathbb{R}^\times/U)\times\mathbb{A}_F^{(\infty),\times}$ for some $\beta \in F^\times$ and $k\in K(\mathfrak{N})$. Hence, $\beta_\infty=x^{-1} y$ in $F_\mathbb{R}^\times/U$, which implies that $\beta\in F^\times\cap F_{\mathbb{R},+}^\times$. Also we have $\beta^{(\infty)}=k^{-1}\in K(\mathfrak{N})$. Therefore, $\beta$ is an element of $E(\mathfrak{N})$ so that $x=y$ in $E(\mathfrak{N})\backslash F_{\mathbb{R},+}^\times/U$. On the other hand, $\iota_a$ is open because $\mathbb A_F^{(\infty),\times}/K(\mathfrak N)$ is discrete. Being injetive and open, $\iota_a$ is a homeomorphism onto the image.
\end{proof}

Let us denote by $\mathrm{Cl}_F^+(\mathfrak{N})$ the idelic narrow ray class group of $F$ with conductor 
$\mathfrak{N}$, which is given by
$$
\mathrm{Cl}_F^+(\mathfrak{N}):=F^\times\backslash\mathbb{A}_F^\times/\big(F_{\mathbb{R},+}^\times\times K(\mathfrak{N})\big).
$$ 
Let us denote by $Y(\mathfrak{N})^\circ$ the image of $\iota_1$, which is an $r$-torus equipped with the group structure. 
It is a standard fact that $Y(\mathfrak N)$ fibres over $\mathrm{Cl}_F^+(\mathfrak{N})$. We state and prove it, due to the lack of suitable references.

\begin{prop}\label{idelic:torus:decomposition}
The map
$$
Y(\mathfrak{N})\rightarrow\mathrm{Cl}_F^+(\mathfrak{N}),\ (x,a)\mapsto ax
$$
is a principal $Y(\mathfrak{N})^\circ$-bundle, where the space $\mathrm{Cl}_F^+(\mathfrak{N})$ is equipped with the discrete topology.
Furthermore, the fibre of the bundle over an ideal class $[a]\in\mathrm{Cl}_F^+(\mathfrak{N})$ is given by the image of the map $\iota_a$.
\end{prop}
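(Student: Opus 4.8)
The plan is to recognise the map in the statement as the quotient of $Y(\mathfrak N)$ by a natural action of the torus $Y(\mathfrak N)^\circ$, and then to read off the principal bundle structure from the homeomorphisms $\iota_a$ supplied by Proposition~\ref{property:torus:embedding}. First I would rewrite the two spaces uniformly as double quotients: since $U\subseteq F_{\mathbb R,+}^\times$ affects only the archimedean factor, one has $Y(\mathfrak N)=F^\times\backslash\mathbb A_F^\times/(U\times K(\mathfrak N))$, while by definition $\mathrm{Cl}_F^+(\mathfrak N)=F^\times\backslash\mathbb A_F^\times/(F_{\mathbb R,+}^\times\times K(\mathfrak N))$. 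The group $G:=F_{\mathbb R,+}^\times/U$ then acts on $Y(\mathfrak N)$ by translation in the archimedean coordinate; this action commutes with the (abelian) $F^\times$- and $K(\mathfrak N)$-actions, and passing to the further quotient by $G$ recovers exactly the map $(x,a)\mapsto ax\in\mathrm{Cl}_F^+(\mathfrak N)$. In particular that map is continuous, its target being discrete, and its fibres are precisely the $G$-orbits.

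Next I would show that the $G$-action factors through a free action of $Y(\mathfrak N)^\circ$. For $\varepsilon\in E(\mathfrak N)$, triviality of the $F^\times$-action together with $\varepsilon^{(\infty)}\in K(\mathfrak N)$ forces $\varepsilon_\infty$ to act trivially on $Y(\mathfrak N)$; hence the $G$-action descends to the quotient group $E(\mathfrak N)\backslash F_{\mathbb R,+}^\times/U$, which is exactly $Y(\mathfrak N)^\circ=\mathrm{im}(\iota_1)$ with its group structure (Proposition~\ref{property:torus}(3)). Conversely, if a class $[t]\in Y(\mathfrak N)^\circ$ fixes a point of $Y(\mathfrak N)$, unwinding the defining equivalence relation forces the representing $\beta\in F^\times$ to have finite part in $K(\mathfrak N)$ and archimedean part in $F_{\mathbb R,+}^\times$ (the latter because $t$ is represented in $F_{\mathbb R,+}^\times$ and $U\subseteq F_{\mathbb R,+}^\times$), hence $\beta\in F^\times\cap F_{\mathbb R,+}^\times\cap K(\mathfrak N)=E(\mathfrak N)$ and $[t]$ is trivial. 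Thus $Y(\mathfrak N)^\circ$ acts freely on $Y(\mathfrak N)$ with orbits the fibres of $(x,a)\mapsto ax$.

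It then remains to identify the fibres and trivialise them. A short computation with representatives shows that the fibre over a class represented by $a\in\mathbb A_F^\times$ equals $\mathrm{im}(\iota_a)$: the inclusion $\mathrm{im}(\iota_a)\subseteq\text{fibre}$ is immediate since $\iota_a(x)=(a_\infty x,a^{(\infty)})\mapsto ax$ with $x\in F_{\mathbb R,+}^\times$, and conversely, given $(y,b)$ in that fibre one writes $by=\gamma a s$ with $\gamma\in F^\times$ and $s=(s_\infty,s^{(\infty)})\in F_{\mathbb R,+}^\times\times K(\mathfrak N)$, and uses the triviality of the $\gamma$-action and of $s^{(\infty)}$ modulo $K(\mathfrak N)$ to rewrite $(y,b)$ as $(a_\infty s_\infty,a^{(\infty)})=\iota_a(s_\infty)$; this also yields surjectivity. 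Finally, transporting the $Y(\mathfrak N)^\circ$-action through the homeomorphism $\iota_a$ of Proposition~\ref{property:torus:embedding} turns it into left translation of $E(\mathfrak N)\backslash F_{\mathbb R,+}^\times/U$ on itself, because $[t]\cdot(a_\infty x,a^{(\infty)})=(a_\infty(tx),a^{(\infty)})=\iota_a(tx)$. Hence each fibre is a trivial $Y(\mathfrak N)^\circ$-torsor with $\iota_a$ as trivialisation, and since $\mathrm{Cl}_F^+(\mathfrak N)$ is discrete a trivialisation over each point is precisely what is needed to conclude that $(x,a)\mapsto ax$ is a principal $Y(\mathfrak N)^\circ$-bundle whose fibre over $[a]$ is $\mathrm{im}(\iota_a)$.

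I expect the only genuine friction to be bookkeeping: keeping the several avatars of $Y(\mathfrak N)$ and $\mathrm{Cl}_F^+(\mathfrak N)$ as double quotients straight, tracking the distinct roles of $U$ versus $F_{\mathbb R,+}^\times$, and making sure the base-point dependence of $\iota_a$ is exactly what produces a torsor rather than a canonically trivial fibre. No step is hard in substance; the real inputs are Proposition~\ref{property:torus:embedding} and the unit-group description in Proposition~\ref{property:torus}.
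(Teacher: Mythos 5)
Your proof is correct and follows essentially the same route as the paper's: reduce to the arithmetic of $E(\mathfrak N)=F^\times\cap F_{\mathbb R,+}^\times\cap K(\mathfrak N)$ to identify each fibre with $\mathrm{im}(\iota_a)$, then observe that $Y(\mathfrak N)^\circ$ acts simply transitively on it. The only difference is presentational: you package the map from the start as the quotient of $Y(\mathfrak N)=F^\times\backslash\mathbb A_F^\times/(U\times K(\mathfrak N))$ by the $F_{\mathbb R,+}^\times/U$-action and prove freeness explicitly, whereas the paper verifies well-definedness directly and leaves the simply transitive action as an easy check.
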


\begin{proof}
Let us choose $(x,a),(y,b)\in F_\mathbb{R}^\times\times\mathbb{A}_F^{(\infty),\times}$ such that $(x,a)=(y,b)$ in $Y(\mathfrak{N})$. Then, $x=\beta_\infty y$ in $F^\times_{\mathbb{R}}/U$ and $a=\beta^{(\infty)} b$ in $\mathbb{A}_F^{(\infty),\times}/K(\mathfrak{N})$ for some $\beta \in F^\times$. 
Thus, 
$$
ax=\beta by=by
$$
in $\mathrm{Cl}_F^+(\mathfrak{N})$, so the map is well-defined. Surjectivity and continuity are clear.

Let us choose $(x,c)$ in the fibre over $[a]\in \mathrm{Cl}_F^+(\mathfrak{N})$, then, $ay=\beta ckx$ for some $\beta\in F^\times$, $y\in F_{\mathbb{R},+}^\times$, and $k\in K(\mathfrak{N})$. Therefore,
$$
\iota_{a}(y)=(a_\infty y,a^{(\infty)})=(\beta_\infty x,\beta^{(\infty)}ck)=(x,c).
$$
Conversely, let us choose $(a_\infty x,a^{(\infty)})$ in the image of $\iota_a$, then $(a_\infty x,a^{(\infty)})$ goes to $ax$ under the map $Y(\mathfrak{N})\rightarrow\mathrm{Cl}_F^+(\mathfrak{N})$ and $ax=a$ in $\mathrm{Cl}_F^+(\mathfrak{N})$, which implies that $(a_\infty x,a^{(\infty)})$ is in the fibre over $[a]$. 
We can easily check that $Y(\mathfrak{N})^\circ$ simply transitively acts on each fibre.
So we are done.
\end{proof}

\begin{remark}\label{representative:choice} From {\rm Proposition \ref{idelic:torus:decomposition}}, we observe that
$Y(\mathfrak{N})$ is homeomorphic to the $\mathrm{Cl}_F^+(\mathfrak{N})$-copy of $r$-torus $E(\mathfrak{N})\backslash F^\times_{\mathbb{R},+}/U$ under the map $\bigsqcup_{[a]\in\mathrm{Cl}_F^+(\mathfrak{N})}\iota_a$.
We can easily check that the map $\iota_a$ depends on $a$, but the image of $\iota_a$ is independent on the choice of a representative of a ray class $[a]\in\mathrm{Cl}_F^+(\mathfrak{N})$.
\end{remark}


\section{Derived Hecke algebra of torus over number field}\label{section:hecke}
In this section, let us define the derived Hecke algebra and its action on the cohomology of torus $Y(\mathfrak{N})$. To do so, we follow the explicit description by Venkatesh \cite[\textsection 2]{venkatesh2016derived}.

Let $R$ be a ring.
For a prime $v$ of $F$, let $F_v$ be the $v$-adic completion of $F$, $O_v$ the valuation ring on $F_v$, $N(v)$ the absolute norm of $v$, and $\kappa_v$ the residue field of $v$. 

\subsection{Derived Hecke operator}
Let $v$ be a prime of $F$.
For $z\in F_v^\times/O_v^\times$, let $z:Y(\mathfrak{N})\rightarrow Y(\mathfrak{N})$ be the map induced by the multiplication $a\mapsto az$ with $z$. From this, we obtain the pullback map 
$$
z^*:H^\bullet(Y(\mathfrak{N}),R)\rightarrow H^\bullet(Y(\mathfrak{N}),R).
$$ 
Let $v$ be a prime ideal of $F$ coprime to $\mathfrak{N}$ such that $N(v)$ is invertible in $R$.
We have the following canonical projection:
$$
\pi_{v,\mathfrak{N}}:Y(v\mathfrak{N})\rightarrow Y(\mathfrak{N}),
$$ 
which is a principal $\kappa_v^\times$-bundle, where the action of $k\in\kappa_v^\times$ on $(a,x)\in Y(v\mathfrak{N})$ is given by $k\cdot(a,x)=(ak^{-1},x)$. 
For a discrete group $G$, let $\mathcal{B}G$ be a classifying space of $G$.
By its universal property, $\pi_{v,\mathfrak{N}}$ corresponds to a map 
\begin{align}\label{eq:3.1}
\phi_{v,\mathfrak{N}}:Y(\mathfrak{N})\rightarrow\mathcal{B}\kappa_v^\times,
\end{align}
which is determined  uniquely up to homotopy.
Note that the inflation map $H^\bullet(\kappa_v^\times,R)\rightarrow H^\bullet(O_v^\times,R)$ is clearly an isomorphism as $N(v)$ is invertible in $R$. Thus, from now on, for $\alpha\in H^\bullet(O_v^\times,R)$, we denote by $\alpha\in H^\bullet(\kappa_v^\times,R)$ the image of $\alpha\in H^\bullet(O_v^\times,R)$ under the inverse of the inflation map by abusing the notation.
Let us denote by $\langle\cdot\rangle$ the composition of the following maps:
\begin{equation}\label{pullback:homotopy}
\begin{tikzcd}
H^\bullet(O_v^\times,R) \arrow{r}{\cong} &  H^\bullet(\kappa_v^\times,R) \arrow{r}{\cong} &  H^\bullet(\mathcal{B}\kappa_v^\times,R)\arrow{r}{\phi_{v,\mathfrak{N}}^*} & H^\bullet(Y(\mathfrak{N}),R).
\end{tikzcd}
\end{equation}
Note that $\langle\cdot\rangle:H^\bullet(O_v^\times,R)\to H^\bullet(Y(\mathfrak{N}),R)$ is a map of graded $R$-algebras since $H^\bullet(-,R)$ is a functor into the category of graded $R$-algebras.

\begin{defn}\label{derived:hecke:operator:definition}
For $z\in F_v^\times/O_v^\times$ and $\alpha\in H^\bullet(O_v^\times,R)$, let us define $h_{z,\alpha}\in\mathrm{End}_RH^\bullet(Y(\mathfrak{N}),R)$ as follows:
$$
h_{z,\alpha}c:=\langle\alpha\rangle\cup z^*c\text{ for }c\in H^\bullet(Y(\mathfrak{N}),R),
$$ 
where $\cup$ is the cup product on the graded $R$-algebra $H^\bullet(Y(\mathfrak{N}),R)$. 
\end{defn}
$h_{z,\alpha}$ is called {\it derived Hecke operator}.

\begin{remark}\label{level:dividing:trivial}
We can also define the derived Hecke action for prime ideals $v$ of $F$ dividing $\mathfrak{N}$, but in the case of torus, such action is trivial for positive degree.
\end{remark}

Let us record the properties of the derived Hecke operators:
\begin{prop}\label{property:hecke:operator}
\begin{itemize}
\item[(1)] The map $F_v^\times/O_v^\times\to\mathrm{Aut}_RH^\bullet(Y(\mathfrak{N}),R)$ defined by $z\mapsto h_{z,1}$ is a group homomorphism.
\item[(2)] The map $H^\bullet(O_v^\times,R)\to\mathrm{End}_RH^\bullet(Y(\mathfrak{N}),R)$ defined by $\alpha\mapsto h_{1,\alpha}$ is a map of graded $R$-algebras.
\end{itemize}
\end{prop}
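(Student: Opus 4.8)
The plan is to unwind both statements to elementary properties of pullback maps and cup products, using the fact—recorded just before Definition~\ref{derived:hecke:operator:definition}—that $\langle\cdot\rangle\colon H^\bullet(O_v^\times,R)\to H^\bullet(Y(\mathfrak{N}),R)$ is a map of graded $R$-algebras. First I would make two preliminary reductions. Since $\langle\cdot\rangle$ is a ring map it sends the unit $1\in H^0(O_v^\times,R)$ to the unit $1\in H^0(Y(\mathfrak{N}),R)$, so $h_{z,1}c=\langle 1\rangle\cup z^*c=z^*c$; that is, $h_{z,1}$ is exactly the pullback $z^*$. Likewise, for $z=1$ the multiplication map $1\colon Y(\mathfrak{N})\to Y(\mathfrak{N})$ is the identity, hence $1^*=\mathrm{id}$ and $h_{1,\alpha}c=\langle\alpha\rangle\cup c$ is simply cup product with $\langle\alpha\rangle$.

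For (1), I would note that the multiplication maps compose as self-maps of $Y(\mathfrak{N})$: the composite ``multiply by $z$, then by $w$'' is ``multiply by $zw$'', directly from the defining rule $a\mapsto az$ together with the commutativity of $F_v^\times$. Contravariance of $H^\bullet(-,R)$ then yields $(zw)^*=z^*\circ w^*$, and since $F_v^\times/O_v^\times$ is abelian this is precisely the homomorphism property. Each $z^*$ is invertible because $z^*\circ(z^{-1})^*=(z^{-1}z)^*=1^*=\mathrm{id}$, so $z\mapsto h_{z,1}=z^*$ indeed lands in $\mathrm{Aut}_R H^\bullet(Y(\mathfrak{N}),R)$ and is a group homomorphism.

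For (2), the map $\alpha\mapsto h_{1,\alpha}=\langle\alpha\rangle\cup(-)$ is $R$-linear and additive because the cup product is $R$-bilinear and $\langle\cdot\rangle$ is additive; it is degree-preserving, since an element of $H^d(O_v^\times,R)$ is sent to an endomorphism of $H^\bullet(Y(\mathfrak{N}),R)$ that raises cohomological degree by $d$; it sends $1$ to $\mathrm{id}$ by the first paragraph; and for $\alpha,\beta\in H^\bullet(O_v^\times,R)$, associativity of the cup product together with the multiplicativity of $\langle\cdot\rangle$ gives
$$
h_{1,\alpha}\circ h_{1,\beta}(c)=\langle\alpha\rangle\cup(\langle\beta\rangle\cup c)=(\langle\alpha\rangle\cup\langle\beta\rangle)\cup c=\langle\alpha\cup\beta\rangle\cup c=h_{1,\alpha\cup\beta}(c),
$$
so composition in $\mathrm{End}_R H^\bullet(Y(\mathfrak{N}),R)$ matches the product in $H^\bullet(O_v^\times,R)$. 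Hence $\alpha\mapsto h_{1,\alpha}$ is a homomorphism of graded $R$-algebras.

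I do not expect a genuine obstacle here; the statement is formal once the two reductions above are in place. The only points deserving a moment's care are the contravariance bookkeeping in (1)—where abelianness of $F_v^\times/O_v^\times$ is used to pass from an a priori anti-homomorphism to a homomorphism—and, in (2), keeping the graded-commutativity signs consistent so that the identity $h_{1,\alpha}\circ h_{1,\beta}=h_{1,\alpha\cup\beta}$ is read in the correct order; both are harmless because the source algebras are (graded-)commutative.
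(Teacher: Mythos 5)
Your argument is a spelled-out version of what the paper treats as immediate: part (1) is contravariance of pullback applied to the multiplication maps, and part (2) is exactly the observation—already recorded before Definition~\ref{derived:hecke:operator:definition}—that $\langle\cdot\rangle$ is a map of graded $R$-algebras, so cupping with its image gives an algebra homomorphism. This matches the paper's proof; the extra detail (including the correct reduction $h_{z,1}=z^*$ and $h_{1,\alpha}=\langle\alpha\rangle\cup(-)$) is accurate.
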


\begin{proof} 
(1) is immediate from the definition. 
(2) is immediate since $\langle\cdot\rangle$ is a map of graded $R$-algebras.
\end{proof}

Denote by $i_{v,\mathfrak{N}}$ the injection
$$
i_{v,\mathfrak{N}}:E(\mathfrak{N})/E(v\mathfrak{N})\to\kappa_v^\times, \varepsilon\mapsto\varepsilon_v\text{ mod }v
$$
induced by the embedding $O_F^\times\to O_v^\times,\varepsilon\mapsto\varepsilon_v$.
There is an action $\varepsilon\in E(\mathfrak{N})/E(v\mathfrak{N})$ on $x\in E(\mathfrak{N})\backslash F_{\mathbb{R},+}^\times/U$ given as $\varepsilon\cdot x:=\varepsilon x$.
We can check that this action makes the canonical projection 
\begin{align}\label{eq:3.3}
E(v\mathfrak{N})\backslash F_{\mathbb{R},+}^\times/U\to E(\mathfrak{N})\backslash F_{\mathbb{R},+}^\times/U
\end{align}
principal $E(\mathfrak{N})/E(v\mathfrak{N})$-bundle. 
Let $a$ be an element of $\mathbb{A}_F^\times$ and recall that $\iota_{a}:E(v\mathfrak{N})\backslash F_{\mathbb{R},+}^\times/U\to Y(v\mathfrak{N})$ is the map defined by $\iota_{a}(x)=(a_\infty x,a^{(\infty)})$.
Then, we have the following diagram:
\begin{equation}\label{maps:bundles}
\begin{tikzcd}
E(v\mathfrak{N})\backslash F_{\mathbb{R},+}^\times/U \arrow{r}{\iota_a}\arrow{d} & Y(v\mathfrak{N}) \arrow{d}{\pi_{v,\mathfrak{N}}}  
\\
E(\mathfrak{N})\backslash F_{\mathbb{R},+}^\times/U \arrow{r}{\iota_a} & Y(\mathfrak{N}).
\end{tikzcd}
\end{equation}
In the way $\phi_{v,\mathfrak N}$ of \eqref{eq:3.1} arises from $\pi_{v,\mathfrak N}$, \eqref{eq:3.3} gives rise to a map $\psi_{v,\mathfrak N}$ from $E(\mathfrak{N})\backslash F_{\mathbb{R},+}^\times/U$ to $\mathcal{B}\frac{E(\mathfrak{N})}{E(v\mathfrak{N})}$.
Combining them, we obtain the following diagram:
\begin{equation}\label{maps:spaces}
\begin{tikzcd}
E(\mathfrak{N})\backslash F_{\mathbb{R},+}^\times/U \arrow{r}{\iota_a} \arrow{d}{\psi_{v,\mathfrak{N}} } & Y(\mathfrak{N}) \arrow{d}{\phi_{v,\mathfrak{N}}}
\\
\mathcal{B}\frac{E(\mathfrak{N})}{E(v\mathfrak{N})} \arrow{r}{\mathcal{B}i_{v,\mathfrak{N}}} & \mathcal{B}\kappa_v^\times.
\end{tikzcd}
\end{equation}
We claim that \eqref{maps:spaces} is commutative up to homotopy. 
Note that
$$
\iota_{a}(\varepsilon\cdot x)=(a_\infty\varepsilon x,a^{(\infty)})=(a_\infty x,i_{v,\mathfrak{N}}(\varepsilon^{-1}) a^{(\infty)})
$$
in $Y(v\mathfrak{N})$ for $\varepsilon\in E(\mathfrak{N})$. 
Therefore, the diagram (\ref{maps:bundles}) is equivariant with respect to the map $i_{v,\mathfrak{N}}$, which implies that the diagram (\ref{maps:spaces}) is homotopy-commutative.
Taking the cohomology ring functor to the diagram (\ref{maps:spaces}), we obtain the following commutative diagram of cohomology groups:
\begin{equation}\label{cohomology:commutative:diagram:2}
\begin{tikzcd}
H^\bullet(E(\mathfrak{N})\backslash F_{\mathbb{R},+}^\times/U,R)  &  \arrow{l}{\iota_a^*}  H^\bullet(Y(\mathfrak{N}),R) 
\\
H^\bullet(\mathcal{B}\frac{E(\mathfrak{N})}{E(v\mathfrak{N})},R) \arrow{u}{\psi_{v,\mathfrak{N}}^*} & \arrow{l}{\mathcal{B}i_{v,\mathfrak{N}}^*}  H^\bullet(\mathcal{B}\kappa_v^\times,R) \arrow{u}{\phi_{v,\mathfrak{N}}^*}.
\end{tikzcd}
\end{equation}

On the other hand, we have the following commutative diagram:
$$
\begin{tikzcd}
F_{\mathbb{R},+}^\times/U \arrow{d} \arrow{r} &  E(v\mathfrak{N})\backslash F_{\mathbb{R},+}^\times/U \arrow{d} 
\\
E(\mathfrak{N})\backslash F_{\mathbb{R},+}^\times/U \arrow{r}{=} &  E(\mathfrak{N})\backslash F_{\mathbb{R},+}^\times/U,
\end{tikzcd}
$$
where the left vertical map and the top horizontal map are naturally principal $E(\mathfrak{N})$ bundle and canonical principal $E(v\mathfrak{N})$ bundle, respectively, where $\varepsilon\in E(\mathfrak{N})$ acts on $x\in F_{\mathbb{R},+}^\times/U$ as $\varepsilon\cdot x:=\varepsilon x$. Therefore, this diagram is equivariant under the canonical projection $E(\mathfrak{N})\to E(\mathfrak{N})/E(v\mathfrak{N})$.
Hence, we obtain the following homotopy-commutative diagram
$$
\begin{tikzcd}
E(\mathfrak{N})\backslash  F_{\mathbb{R},+}^\times/U \arrow{d} \arrow{r}{=} & E(\mathfrak{N})\backslash F_{\mathbb{R},+}^\times/U \arrow{d}{\psi_{v,\mathfrak{N}}} 
\\
\mathcal{B}E(\mathfrak{N}) \arrow{r} & \mathcal{B}\frac{E(\mathfrak{N})}{E(v\mathfrak{N})}
\end{tikzcd}
$$
where the bottom horizontal map and the left vertical map are induced by the canonical projection and the canonical principal $E(\mathfrak{N})$-bundle, respectively.
Taking the functor $H^\bullet(-,R)$ to the just above diagram, we obtain the following commutative diagram:
\begin{equation}\label{cohomology:commutative:diagram:3}
\begin{tikzcd}
& H^\bullet(E(\mathfrak{N})\backslash F_{\mathbb{R},+}^\times/U,R)
\\
H^\bullet(\mathcal{B}E(\mathfrak{N}),R)  \arrow{ur}{\cong} & H^\bullet(\mathcal{B}\frac{E(\mathfrak{N})}{E(v\mathfrak{N})},R) \arrow{l}\arrow{u}{\psi^*_{v,\mathfrak{N}}},
\end{tikzcd}
\end{equation}
where the diagonal arrow is an isomorphism of graded $R$-algebras by Proposition \ref{property:torus} and the horizontal arrow is the functorial map induced from the canonical projection of groups.
Combining the diagrams (\ref{cohomology:commutative:diagram:2}) and (\ref{cohomology:commutative:diagram:3}), we obtain the following commutative diagram:
\begin{equation}\label{cohomology:commutative:diagram:4}
\begin{tikzcd}
 & H^\bullet(E(\mathfrak{N})\backslash F_{\mathbb{R},+}^\times/U,R)  &  \arrow{l}{\iota_a^*}  H^\bullet(Y(\mathfrak{N}),R) 
\\
H^\bullet(\mathcal{B}E(\mathfrak{N}),R) \arrow{ur}{\cong}  & H^\bullet(\mathcal{B}\frac{E(\mathfrak{N})}{E(v\mathfrak{N})},R) \arrow{u}{\psi_{v,\mathfrak{N}}^*}\arrow{l} & \arrow{l}{\mathcal{B}i_{v,\mathfrak{N}}^*}  H^\bullet(\mathcal{B}\kappa_v^\times,R) \arrow{u}{\phi_{v,\mathfrak{N}}^*}.
\end{tikzcd}
\end{equation} 
Due to the property of the functor $\mathcal{B}$, we finally obtain the following commutative diagram from \eqref{cohomology:commutative:diagram:4}:
\begin{equation}\label{cohomology:commutative:diagram}
\begin{tikzcd}
 H^\bullet(E(\mathfrak{N})\backslash F_{\mathbb{R},+}^\times/U,R)  &  \arrow{l}{\iota_a^*}  H^\bullet(Y(\mathfrak{N}),R) 
\\
H^\bullet(E(\mathfrak{N}),R) \arrow{u}{\cong} &  \arrow{l}{i_{v,\mathfrak{N}}^*}  H^\bullet(\kappa_v^\times,R) \arrow{u}{\phi_{v,\mathfrak{N}}^*},
\end{tikzcd}
\end{equation}
where we denote by $i_{v,\mathfrak{N}}:E(\mathfrak{N})\to\kappa_v^\times$ the composition of the canonical projection $E(\mathfrak{N})\to E(\mathfrak{N})/E(v\mathfrak{N})$ with the injection $i_{v,\mathfrak{N}}$ by abusing the notation.
From now on, by abusing the notation, let us denote by $\iota^*_a$ the composition of the identification $H^\bullet(E(\mathfrak{N}),R)\xrightarrow{\cong} H^\bullet(E(\mathfrak{N})\backslash F_{\mathbb{R},+}^\times/U,R)$ and the map $\iota^*_a$.
Then, we have the following explicit description on $h_{z,\alpha}$:
\begin{prop}\label{computation:hecke:operator}  
Let $z\in F_v^\times/O_v^\times$ and $\alpha\in H^\bullet(O_v^\times,R)$.
Then,
$$
\iota_a^*(h_{z,\alpha}c)=i_{v,\mathfrak{N}}^*(\alpha)\cup\iota_{za}^*(c)\text{ for }c\in H^\bullet(Y(\mathfrak{N}),R).
$$
\end{prop}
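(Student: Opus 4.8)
The plan is to unwind Definition \ref{derived:hecke:operator:definition} and push everything through the ring homomorphism $\iota_a^*$, then invoke the commutative diagram \eqref{cohomology:commutative:diagram}. By definition $h_{z,\alpha}c=\langle\alpha\rangle\cup z^*c$, so applying $\iota_a^*$ and using that it is a morphism of graded $R$-algebras (it is the composite of a pullback along a continuous map with the ring isomorphism $H^\bullet(E(\mathfrak{N}),R)\xrightarrow{\cong}H^\bullet(E(\mathfrak{N})\backslash F_{\mathbb{R},+}^\times/U,R)$) gives
$$
\iota_a^*(h_{z,\alpha}c)=\iota_a^*\langle\alpha\rangle\cup\iota_a^*(z^*c)=\iota_a^*\langle\alpha\rangle\cup(z\circ\iota_a)^*c.
$$
It then remains to identify the two factors on the right.

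For the second factor I would first check that $z\circ\iota_a=\iota_{za}$ as maps $E(\mathfrak{N})\backslash F_{\mathbb{R},+}^\times/U\to Y(\mathfrak{N})$. Lifting $z\in F_v^\times/O_v^\times$ to the finite idele with the chosen component at $v$ and trivial component elsewhere, its archimedean part is trivial, so $z\cdot(a_\infty x,a^{(\infty)})=(a_\infty x,za^{(\infty)})=((za)_\infty x,(za)^{(\infty)})=\iota_{za}(x)$; here one uses $(za)_\infty=a_\infty$ and $(za)^{(\infty)}=za^{(\infty)}$. Hence $\iota_a^*(z^*c)=\iota_{za}^*c$. (One should note that the image of $\iota_{za}$ may sit in a different connected component of $Y(\mathfrak{N})$ than that of $\iota_a$, per Proposition \ref{idelic:torus:decomposition}; this is harmless, $\iota_{za}^*$ being simply the pullback along that map.)

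For the first factor, recall $\langle\alpha\rangle=\phi_{v,\mathfrak{N}}^*(\alpha)$, where $\alpha$ is regarded via the inflation isomorphism as a class in $H^\bullet(\mathcal{B}\kappa_v^\times,R)\cong H^\bullet(\kappa_v^\times,R)$. The square \eqref{cohomology:commutative:diagram} — assembled from \eqref{cohomology:commutative:diagram:2} and \eqref{cohomology:commutative:diagram:3} with the identification $H^\bullet(E(\mathfrak{N}),R)\xrightarrow{\cong}H^\bullet(E(\mathfrak{N})\backslash F_{\mathbb{R},+}^\times/U,R)$ folded into $\iota_a^*$ — says precisely that $\iota_a^*\circ\phi_{v,\mathfrak{N}}^*=i_{v,\mathfrak{N}}^*$ on $H^\bullet(\kappa_v^\times,R)$. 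Therefore $\iota_a^*\langle\alpha\rangle=i_{v,\mathfrak{N}}^*(\alpha)$, and substituting both identifications into the displayed equation yields $\iota_a^*(h_{z,\alpha}c)=i_{v,\mathfrak{N}}^*(\alpha)\cup\iota_{za}^*c$, as claimed.

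The only genuinely delicate point is justifying that diagram \eqref{cohomology:commutative:diagram} applies to the class $\langle\alpha\rangle$ in the way I use it, i.e.\ that the chain of homotopy-commutative squares built from the bundle maps \eqref{maps:bundles} and \eqref{maps:spaces} really transports $\phi_{v,\mathfrak{N}}^*\alpha$ to $i_{v,\mathfrak{N}}^*\alpha$ after all the notationally suppressed identifications; but this is exactly what the discussion preceding the proposition established (via the $i_{v,\mathfrak{N}}$-equivariance of \eqref{maps:bundles}), so the proof amounts to assembling those commutativities carefully. Everything else — multiplicativity of pullbacks and the elementary idele computation $z\circ\iota_a=\iota_{za}$ — is formal.
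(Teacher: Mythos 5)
Your proof is correct and follows essentially the same route as the paper's: apply the ring homomorphism $\iota_a^*$ to $\langle\alpha\rangle\cup z^*c$, use diagram \eqref{cohomology:commutative:diagram} to identify $\iota_a^*\langle\alpha\rangle=i_{v,\mathfrak{N}}^*(\alpha)$, and observe $z\circ\iota_a=\iota_{za}$. You merely spell out the last identity (via the idele computation) and the transport through the commutative diagram in more detail than the paper, which states both as immediate.
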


\begin{proof}
From the commutative diagram (\ref{cohomology:commutative:diagram}), we obtain that $\iota_a^*(\langle\alpha\rangle)=i_{v,\mathfrak{N}}^*(\alpha)$ under the identification. 
Also we have that $\iota_a^*(z^*c)=(z\circ\iota_{a})^*(c)=\iota_{za}^*(c)$. 
Then, we are done since $\iota_a^*$ is a map of cohomology rings.
\end{proof}

\subsection{Derived Hecke algebra}
Let $v$ be a prime ideal of $F$ whose absolute norm is invertible in $R$.
Let us recall that the abstract derived Hecke algebra at $v$ with coefficients in $R$ is given by 
$$
\mathcal{H}_{v,R}^\bullet=\mathrm{Ext}^\bullet_{C_{\rm c}(F_v^\times,R)}(R[F_v^\times/K(\mathfrak{N})_v],R[F_v^\times/K(\mathfrak{N})_v]).
$$
There is an explicit description on the action of abstract derived Hecke algebras $\mathcal{H}^\bullet_{v,R}$ on the cohomology ring $H^\bullet(Y(\mathfrak{N}),R)$. 
We can easily check that $\mathcal{H}_{v,R}^j$ acts trivially on $H^\bullet(Y(\mathfrak{N}),R)$ if $j>0$ and $v\mid \mathfrak{N}$, as mentioned in Remark \ref{level:dividing:trivial}. Therefore, let us assume that $v$ is coprime to $\mathfrak{N}$.
By Venkatesh \cite[\textsection 2.4, (25)]{venkatesh2016derived}, we have the following isomorphism of $R$-modules:
\begin{equation}\label{explicit:description}
R[F_v^\times/O_v^\times]\otimes_R H^\bullet(O_v^\times,R)\cong \mathcal{H}^\bullet_{v,R}.
\end{equation}
Let us define an action of $\mathcal{H}^\bullet_{v,R}$ on $c\in H^\bullet(Y(\mathfrak{N}),R)$ by $(z,\alpha)\cdot c:= h_{z,\alpha}c$, where $(z,\alpha)\in\mathcal{H}^\bullet_{v,R}$ is the image of $z\otimes\alpha\in R[F_v^\times/O_v^\times]\otimes_R H^\bullet(O_v^\times,R)$ under the isomorphism (\ref{explicit:description}).
Let us write the representation of the given action as follows: 
$$
\mathcal{H}^\bullet_{v,R}\to\mathrm{End}_R H^\bullet(Y(\mathfrak{N}),R),\ (z,\alpha)\mapsto h_{z,\alpha},
$$
which is clearly a map of graded $R$-algebras.
Let us denote by $\mathbb{T}_{v,R}^\bullet$ the image of $\mathcal{H}_{v,R}^\bullet$ in $\mathrm{End}_{R}H^\bullet(Y(\mathfrak{N}),R)$, which is a graded $R$-algebra.
Note that any pair of operators in $\mathbb{T}_{v,R}^\bullet$ for any distincts pair of primes $v$ commute in $\mathrm{End}_{R}H^\bullet(Y(\mathfrak{N}),R)$ by Venkatesh \cite[\textsection 2.10, Remark]{venkatesh2016derived}. 
From this, we may define the algebra generated by $\mathbb{T}_{v,R}^\bullet$ for primes $v$ of $F$. 
Let $T_0$ be the set of primes $v$ of $F$ such that $N(v)$ is invertible in $R$.

\begin{defn}[Derived Hecke algebra, {Venkatesh \cite[\textsection 2]{venkatesh2016derived}}]\label{definition:derived:hecke:algebra}
Let us denote by $\mathbb{T}_R^\bullet$ the graded $R$-subalgebra of $\mathrm{End}_{R}H^\bullet(Y(\mathfrak{N}),R)$ generated by $\mathbb{T}_{v,R}^\bullet$ for all $v\in T_0$.
We call $\mathbb{T}_R^\bullet$ {\it derived Hecke algebra} of $Y(\mathfrak{N})$ with coefficients in $R$.
\end{defn} 

As stated in Feng-Harris \cite[\textsection 6.1.3]{feng2024derived}, the convolution product of the left hand side of the map {\rm (\ref{explicit:description})} corresponds to the multiplication in $\mathrm{Ext}^\bullet$ groups on the right hand side. Therefore, $\mathcal{H}^\bullet_{v,R}$ is graded commutative. This subsumes the graded commutativity of $\mathbb{T}_R^\bullet$. In the next proposition, we provide a direct proof of it.

\begin{prop}\label{hecke:algebra:commutative}
\begin{itemize}
\item[(1)] $\mathbb{T}_{R}^0$ is non-zero.
\item[(2)] $\mathbb{T}_R^\bullet$ is a graded commutative $\mathbb{T}_{R}^0$-algebra. 
\end{itemize}
\end{prop}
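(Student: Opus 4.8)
The plan is to prove both claims by working with the explicit description of the derived Hecke action supplied by Proposition \ref{computation:hecke:operator}, together with the identification of $H^\bullet(Y(\mathfrak N),R)$ as a direct sum, over the ray class group, of copies of $H^\bullet(E(\mathfrak N),R)$ coming from Remark \ref{representative:choice}.

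For (1), I would observe that the identity endomorphism of $H^\bullet(Y(\mathfrak N),R)$ is $h_{1,1}$, the derived Hecke operator attached to $z=1$ and $\alpha=1\in H^0(O_v^\times,R)$, for any $v\in T_0$; since $H^0(Y(\mathfrak N),R)\cong R^{|\mathrm{Cl}_F^+(\mathfrak N)|}$ is nonzero (the space $Y(\mathfrak N)$ is a nonempty finite union of $r$-tori), this identity operator is nonzero in $\mathrm{End}_R H^\bullet(Y(\mathfrak N),R)$. Hence $\mathbb T_R^0$, which contains it, is nonzero. I should be a little careful that $h_{1,1}$ really lands in degree $0$: it acts as $c\mapsto \langle 1\rangle\cup 1^*c = c$, so it is the identity, which is a degree-$0$ operator, so it lies in $\mathbb T_{v,R}^0\subseteq \mathbb T_R^0$.

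For (2), the key point is graded commutativity. First, operators attached to distinct primes $v\neq w$ commute, which is quoted from Venkatesh \cite[\textsection 2.10, Remark]{venkatesh2016derived}; so it remains to check the graded-commutation relation among the operators $h_{z,\alpha}$ for a fixed prime $v$, and to see that $\mathbb T_R^0$ is central. Using Proposition \ref{computation:hecke:operator}, for each $a\in\mathbb A_F^\times$ we have $\iota_a^*(h_{z,\alpha}h_{w,\beta}c) = i_{v,\mathfrak N}^*(\alpha)\cup \iota_{za}^*(h_{w,\beta}c) = i_{v,\mathfrak N}^*(\alpha)\cup i_{v,\mathfrak N}^*(\beta)\cup \iota_{wza}^*(c)$, where I have used that the pullback map relabelled via $a\mapsto za$ carries the $i_{v,\mathfrak N}^*(\beta)$-term to the same $i_{v,\mathfrak N}^*(\beta)$ (the class $i_{v,\mathfrak N}^*(\beta)\in H^\bullet(E(\mathfrak N),R)$ does not depend on the auxiliary idele, only on $v$). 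Comparing with the analogous expansion of $\iota_a^*(h_{w,\beta}h_{z,\alpha}c)$ and invoking graded commutativity of the cup product on $H^\bullet(E(\mathfrak N),R)$ gives $h_{z,\alpha}h_{w,\beta} = (-1)^{|\alpha||\beta|} h_{w,\beta}h_{z,\alpha}$ once one knows the family $\{\iota_a^*\}_{a}$ is jointly injective on $H^\bullet(Y(\mathfrak N),R)$ — which it is, precisely because $\bigsqcup_a \iota_a$ is a homeomorphism onto $Y(\mathfrak N)$ by Remark \ref{representative:choice}. For the centrality of $\mathbb T_R^0$: degree-$0$ generators are $h_{z,1}$, and since $|1|=0$ the sign above is trivial, so $h_{z,1}$ commutes with every $h_{w,\beta}$; as these generate $\mathbb T_R^\bullet$, this makes $\mathbb T_R^\bullet$ a $\mathbb T_R^0$-algebra and the grading is compatible with multiplication by construction. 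Finally one should note the relations from Proposition \ref{property:hecke:operator}, $h_{z,1}h_{z',1}=h_{zz',1}$ and $\alpha\mapsto h_{1,\alpha}$ an algebra map, organize $\mathbb T_{v,R}^\bullet$ into a quotient of $R[F_v^\times/O_v^\times]\otimes_R H^\bullet(O_v^\times,R)$, which is graded commutative as $H^\bullet(O_v^\times,R)$ is (being the cohomology of an abelian group), matching the remark preceding the proposition.

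The main obstacle I anticipate is bookkeeping rather than depth: one must verify carefully that in $\iota_a^*(h_{z,\alpha}c)=i_{v,\mathfrak N}^*(\alpha)\cup\iota_{za}^*(c)$ the class $i_{v,\mathfrak N}^*(\alpha)$ is genuinely independent of $a$ (it is, since $i_{v,\mathfrak N}:E(\mathfrak N)\to\kappa_v^\times$ does not involve $a$), so that iterating two Hecke operators produces a single cup product of $i^*$-classes with a single relabelled pullback of $c$; and one must justify that joint injectivity of the $\iota_a^*$ lets one deduce operator identities from their pullbacks. Both are consequences of the structural results already established in \textsection\ref{section:torus}, so the proof is essentially an assembly of Proposition \ref{computation:hecke:operator}, Remark \ref{representative:choice}, Proposition \ref{property:hecke:operator}, graded commutativity of cup products, and the cited cross-prime commutation.
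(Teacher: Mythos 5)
Your proposal is correct and follows essentially the same route as the paper: for (1) the paper likewise observes that $h_{1,1}$ is the identity operator on $H^\bullet(Y(\mathfrak N),R)$; for (2) the paper reduces to a single prime $v$ via Venkatesh's cross-prime commutation remark, then pulls back the composite operator along $\iota_a^*$ exactly as you do, obtaining $\iota_a^*(h_{z_1,\alpha_1}h_{z_2,\alpha_2}c)=i_{v,\mathfrak N}^*(\alpha_1)\cup i_{v,\mathfrak N}^*(\alpha_2)\cup \iota_{z_2z_1 a}^*(c)$ and concluding from graded commutativity of $\cup$, commutativity of $F_v^\times/O_v^\times$, and the arbitrariness of $a$ and $c$ (the joint-injectivity point you make explicit).
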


\begin{proof}
From Proposition \ref{property:hecke:operator} (1), we obtain the non-triviality of $\mathbb{T}^0_R$ since $h_{1,1}\in\mathbb{T}_{R}^0$ is the identity operator on $H^\bullet(Y(\mathfrak{N}),R)$, where the second subindex $1$ of $h_{1,1}$ is the identity element of $H^0(O_v^\times,R)\cong R$. 

By the definition, $\mathbb{T}_{R}^{i}$'s are $\mathbb{T}_{R}^0$-modules and $\mathbb{T}_{R}^{i}\mathbb{T}_{R}^j\subset\mathbb{T}_{R}^{i+j}$ for any $i,j\geq 0$. 
Due to Venkatesh \cite[\textsection 2.10, Remark]{venkatesh2016derived} we know that the derived Hecke operators at different primes commute. Therefore, it is enough to show that
$$
h_{z_1,\alpha_1}h_{z_2,\alpha_2}=h_{z_2,\alpha_2}h_{z_1,\alpha_1}
$$
for $z_1,z_2\in F_v^\times/O_v^\times$ and $\alpha_1,\alpha_2\in H^\bullet(O_v^\times,R)$.
Let us choose $a\in\mathbb{A}_F^\times$ and $c\in H^\bullet(Y(\mathfrak{N}),R)$. Note that $\iota_a^*$ and $z_i^*$ are maps of cohomology rings. Thus,
$$
\iota_a^*(h_{z_1,\alpha_1}h_{z_2,\alpha_2}c)=\iota_a^*\langle\alpha_1\rangle\cup\iota_a^*z_1^*(\langle\alpha_2\rangle\cup z_2^*c)=i_{v,\mathfrak{N}}^*(\alpha_1)\cup i_{v,\mathfrak{N}}^*(\alpha_2)\cup\iota_{z_2z_1a}^*c
$$
by the commutative diagram (\ref{cohomology:commutative:diagram}). Note that $\cup$ is graded commutative and $z_1z_2=z_2z_1$. Since $a$ and $c$ are arbitrary, this completes the proof.
\end{proof}

Let $a\in\mathbb{A}_F^\times$. The map $\iota_a:E(\mathfrak{N})\backslash F_{\mathbb{R},+}^\times/U\to Y(\mathfrak{N})$ is a homeomorphism onto the image, so it is proper. Therefore, it gives rise to the pushforward map 
$$
\iota_{a,*}:
H^\bullet(E(\mathfrak{N})\backslash F_{\mathbb{R},+}^\times/U,R)\to H^\bullet(Y(\mathfrak{N}),R).
$$
For $a\in\mathbb{A}_F^\times$, we put 
\begin{align}\label{eq:1a}
1_a:=\iota_{a,*}(1)\in H^0(Y(\mathfrak{N}),R).
\end{align}
Using the singular cochain complex, we can represents $1_a$ as the characteristic function on $Y(\mathfrak{N})$ supported on $\mathrm{Im}(\iota_a)$. Thus, by Remark \ref{representative:choice}, $1_a$ depends only on the ray class of modulus $\mathfrak{N}$ represented by $a$.
We may choose a complete representative set 
$$
C(\mathfrak{N})\subset\mathbb{A}_F^{(\infty),\times}
$$ 
of $\mathrm{Cl}_F^+(\mathfrak{N})$ such that $1\in C(\mathfrak{N})$, where $1$ is the identity element of $\mathbb{A}_F^{(\infty),\times}$. 
By abusing the notation, for $a \in C(\mathfrak N)$ we let $1_a$ stand for $1_{(a,1)}$ of \eqref{eq:1a}.
Then, $1_a$'s generate $H^0(Y(\mathfrak{N}),R)$ for $a\in C(\mathfrak{N})$.
Under the isomorphism $H^0(E(\mathfrak{N}),R)\cong R$, $\iota_b^*(1_a)=\delta_{ab}$ for $a,b\in C(\mathfrak{N})$.

Let $a\in C(\mathfrak{N})$ and $z\in\mathbb{A}_F^\times/\widehat{O}_F^\times$. 
Let $z(a)$ be the element of $C(\mathfrak{N})$ such that $z(a)=(\prod_{v\in T_0}z_v)a$ in $\mathrm{Cl}_F^+(\mathfrak{N})$. 
Since the product is finite as $h_{1,1}$ is the identity map, the operator $a\mapsto z(a)$ on $C(\mathfrak{N})$ is well-defined. From this, we obtain the following permutation representation:
$$
\mathbb{A}_F^\times/\widehat{O}_F^\times\rightarrow\mathrm{Perm}\big(C(\mathfrak{N})\big),\ z\mapsto \big(a\mapsto z(a)\big).
$$
Let us denote
$$
h_{z,1}:=\prod_{v\in T_0}h_{z_v,1}\in\mathbb{T}^0_R,
$$ 
which is well defined by Proposition \ref{hecke:algebra:commutative}.
Based on the fact that $1_a$ is a characteristic function, we obtain that
\begin{equation}\label{hecke:action:representation}
h_{z,1}1_a=z^*1_a=1_{z^{-1}(a)}.
\end{equation}
Let us compute the $\mathbb{F}_p$-vector space structure of $\mathbb{T}^0_R$ and the $\mathbb{T}^0_R$-module structure of $H^0(Y(\mathfrak{N}),R)$ and $\mathbb{T}^1_R$:

\begin{prop}\label{Hecke:module:structure}

\begin{itemize}
\item[(1)] $H^0(Y(\mathfrak{N}),R)=\mathbb{T}_{R}^01_1$.
\item[(2)] $\mathbb{T}^0_R=\sum_{z\in \mathbb{A}_F^\times/\widehat{O}_F^\times}\mathbb{F}_ph_{z,1}$.
\item[(3)] 
$
\mathbb{T}_{R}^1=\sum_{\alpha\in\bigsqcup_{v\in T_0}\mathrm{Hom}(O_v^\times,R)}\mathbb{T}_{R}^0h_{1,\alpha}.
$
\end{itemize}
\end{prop}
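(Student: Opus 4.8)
The plan is to establish each of the three identities by combining the explicit description of the derived Hecke operators from Proposition \ref{computation:hecke:operator} and equation \eqref{hecke:action:representation} with the fibration $Y(\mathfrak N)\to\mathrm{Cl}_F^+(\mathfrak N)$ of Proposition \ref{idelic:torus:decomposition}. For (1), I would start from the fact that the classes $1_a$ for $a\in C(\mathfrak N)$ form an $R$-basis of $H^0(Y(\mathfrak N),R)$, since $Y(\mathfrak N)$ has exactly $|\mathrm{Cl}_F^+(\mathfrak N)|$ connected components (each an $r$-torus, hence connected) indexed by $C(\mathfrak N)$. The permutation representation $z\mapsto(a\mapsto z(a))$ of $\mathbb A_F^\times/\widehat O_F^\times$ on $C(\mathfrak N)$ factors through the finite quotient $\mathrm{Cl}_F^+(\mathfrak N)$ and is transitive, because $\mathrm{Cl}_F^+(\mathfrak N)$ is a group and translation acts transitively on it. By \eqref{hecke:action:representation}, $h_{z,1}1_1=1_{z^{-1}(1)}$, and as $z$ ranges over representatives of $\mathrm{Cl}_F^+(\mathfrak N)$ the element $z^{-1}(1)$ ranges over all of $C(\mathfrak N)$; hence every basis vector $1_a$ lies in $\mathbb T_R^0\cdot 1_1$, giving $H^0(Y(\mathfrak N),R)=\mathbb T_R^0 1_1$.

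For (2), the containment $\supseteq$ is clear since each $h_{z,1}$ lies in $\mathbb T_R^0$ by construction and $\mathbb T_R^0$ is an $\mathbb F_p$-algebra (the relevant norms being invertible forces $R$ to be an $\mathbb F_p$-algebra in the situations of interest; more precisely one argues over the prime field contained in $R$, and the coefficient ring in the later sections is $\mathbb F_p$). For $\subseteq$, I would argue that $\mathbb T_R^0$ is generated as an algebra by the operators $h_{z,1}$ together with the degree-zero parts $h_{1,\alpha}$ with $\alpha\in H^0(O_v^\times,R)\cong R$, but the latter are just scalars, so $\mathbb T_R^0$ is spanned by products $\prod h_{z_i,1}=h_{z,1}$ using Proposition \ref{property:hecke:operator}(1) (the map $z\mapsto h_{z,1}$ is a homomorphism, so products of such operators are again of this form). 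Hence $\mathbb T_R^0=\sum_{z\in\mathbb A_F^\times/\widehat O_F^\times}R\cdot h_{z,1}$, which is the claimed sum once one notes $R$ is an $\mathbb F_p$-algebra.

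For (3), I would again treat $\supseteq$ as immediate and concentrate on $\subseteq$. A general element of $\mathbb T_{v,R}^\bullet$ in degree $1$ is, via the isomorphism \eqref{explicit:description}, an $R$-linear combination of operators $h_{z,\alpha}$ with $z\in F_v^\times/O_v^\times$ and $\alpha\in H^1(O_v^\times,R)=\mathrm{Hom}(O_v^\times,R)$. The key identity is $h_{z,\alpha}=h_{z,1}\circ h_{1,\alpha}$ up to sign, or more usefully $h_{z,\alpha}c=\langle\alpha\rangle\cup z^*c$, which shows $h_{z,\alpha}=h_{z,1}\,h_{1,\alpha}$ modulo the graded-commutativity already proved in Proposition \ref{hecke:algebra:commutative}; since $h_{z,1}\in\mathbb T_R^0$, every degree-one generator lies in $\sum_\alpha\mathbb T_R^0 h_{1,\alpha}$. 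Finally, a general degree-one element of $\mathbb T_R^\bullet$ is a sum of products of generators from various $\mathbb T_{v,R}^\bullet$; by the bigraded structure any such product contributing to degree $1$ has exactly one degree-one factor and the rest in degree $0$, so it again lands in $\sum_\alpha\mathbb T_R^0 h_{1,\alpha}$, using that degree-zero operators at all primes commute and multiply within $\mathbb T_R^0$.

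The main obstacle I anticipate is bookkeeping in (3): one must be careful that products of operators coming from \emph{different} primes, when expanded, really do reduce to the stated form, and in particular that a product with two or more positive-degree factors vanishes or is irrelevant in total degree $1$ — this is where graded-commutativity (hence the sign conventions) and the fact that $H^j(O_v^\times,R)$ for $j\geq 2$ need not vanish must be handled by a degree count rather than by a vanishing statement. I would phrase this cleanly by noting that $\mathbb T_R^\bullet$ is spanned by monomials $h_{z_1,\alpha_1}\cdots h_{z_k,\alpha_k}$ at distinct primes, whose total degree is $\sum\deg\alpha_i$, so the total-degree-one part is spanned by monomials with exactly one $\alpha_i$ of degree $1$ and the rest of degree $0$, i.e. scalars times $h_{z,1}$; collecting the scalar $z$-part into $\mathbb T_R^0$ finishes the argument.
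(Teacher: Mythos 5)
Your proposal is correct and follows essentially the same route as the paper's (very terse) proof, which simply cites the definition of $\mathbb{T}_R^\bullet$, equation \eqref{hecke:action:representation}, Proposition \ref{property:hecke:operator}, and Proposition \ref{hecke:algebra:commutative}; you have merely written out the details — transitivity of the permutation action for (1), the scalar nature of degree-zero $h_{1,\alpha}$ plus $h_{z_1,1}h_{z_2,1}=h_{z_1z_2,1}$ for (2), and the degree count on monomials together with $h_{z,\alpha}=h_{1,\alpha}h_{z,1}$ for (3). One small remark: rather than claiming $h_{z,\alpha}=h_{z,1}\circ h_{1,\alpha}$ ``up to sign'' (which would need $z^*\langle\alpha\rangle=\langle\alpha\rangle$), it is cleaner to note that $h_{1,\alpha}h_{z,1}c=\langle\alpha\rangle\cup z^*c=h_{z,\alpha}c$ holds on the nose, and then commute $h_{z,1}$ past $h_{1,\alpha}$ (with no sign, as $h_{z,1}$ has degree $0$) by Proposition \ref{hecke:algebra:commutative}.
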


\begin{proof}
The claim (1) is immediate from the definition of $\mathbb{T}^0$ and \eqref{hecke:action:representation}.
The claims (2) and (3) follow from the definition of $\mathbb{T}_R^\bullet$, Proposition \ref{property:hecke:operator} and Proposition \ref{hecke:algebra:commutative}.
\end{proof}

\section{Non-vanishing mod {\it p} of derived Hecke algebra}\label{section:nonvanishing}
Let $p$ be a rational prime. We take $R$ to be $\mathbb{F}_p$, the finite field with $p$ elements.
Then, $T_0$ becomes the set of primes $v$ of $F$ coprime to $p\mathfrak{N}$.
Define $T_1\subset T_0$ to be the set of primes $v$ in $T_0$ such that $p\mid N(v)-1$.
We can observe from the Chebotarev density theorem that $T_1$ has a positive proportion in the set of primes of $F$. 
Note that $H^j(O_v^\times,\mathbb{F}_p)\cong H^j(\kappa_v^\times,\mathbb{F}_p)$ if $v\in T_0$. 
For $j\geq 0$, we put
$$
\mathbb{T}^j:=\mathbb{T}_{\mathbb{F}_p}^j\subset\mathrm{End}_{\mathbb{F}_p}H^\bullet(Y(\mathfrak{N}),\mathbb{F}_p).
$$ 

\begin{prop}\label{property:hecke:operator:2}
$\mathbb{T}^\bullet$ is generated by $\mathbb{T}^0$, $\mathbb{T}^1$, and $\mathbb{T}^2$ as a ring.
\end{prop}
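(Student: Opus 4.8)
The plan is to reduce the statement about $\mathbb{T}^\bullet$ to a statement about the cohomology ring of a torus together with the explicit description of the derived Hecke operators obtained in the previous section. Recall from Remark~\ref{representative:choice} that $Y(\mathfrak{N})$ is a finite disjoint union of copies of the $r$-torus $E(\mathfrak{N})\backslash F^\times_{\mathbb{R},+}/U$, so $H^\bullet(Y(\mathfrak{N}),\mathbb{F}_p)$ is, as a graded ring, a finite product of exterior algebras $\bigwedge^\bullet_{\mathbb{F}_p}(H^1)$ on $r$ generators (more precisely, each factor is $H^\bullet(E(\mathfrak{N}),\mathbb{F}_p)$, which is the exterior algebra on $\mathrm{Hom}(E(\mathfrak{N}),\mathbb{F}_p)$ tensored with divided/polynomial pieces when $p$ is small, but in any case is generated in degrees $0$ and $1$). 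The point is that the cup product surjects: $H^i\cdot H^j \to H^{i+j}$ generates everything in positive degree from $H^{\le 1}$.

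\medskip

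First I would make precise the generation of the cohomology ring: using Proposition~\ref{property:torus} and Remark~\ref{representative:choice}, $H^\bullet(Y(\mathfrak{N}),\mathbb{F}_p)$ is generated as an $\mathbb{F}_p$-algebra by $H^0$ and $H^1$. Next, by Proposition~\ref{computation:hecke:operator} the operator $h_{z,\alpha}$ acts by $c \mapsto \langle\alpha\rangle \cup z^*c$, and $\langle\alpha\rangle \in H^{\deg\alpha}(Y(\mathfrak{N}),\mathbb{F}_p)$; for $\alpha$ of degree $1$ this produces, after translating by $z^*$, the class $\langle\alpha\rangle$ itself (apply $h_{z,\alpha}$ to the idempotents $1_a$ of \eqref{eq:1a}, using \eqref{hecke:action:representation}). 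Thus the classes $\langle\alpha\rangle$ for $\alpha \in \mathrm{Hom}(O_v^\times,\mathbb{F}_p)$, $v \in T_1$, together with the $\mathbb{T}^0$-action, realize a spanning set of $H^1$ inside the image of $\mathbb{T}^1$ acting on $H^0$ — this is essentially the content that Proposition~\ref{Hecke:module:structure}(3) packages, and it is exactly where the Grunwald--Wang input (used later for Theorem~A) guarantees enough characters $\langle\alpha\rangle$ to span. For the purposes of \emph{this} proposition one needs only that $\mathbb{T}^1$ contains operators whose image generates $H^1$ as a module over $H^0$ acted on via $\mathbb{T}^0$; since any element of $\mathrm{End}_{\mathbb{F}_p}H^\bullet$ of degree $\ge 0$ that multiplies by a cohomology class is a composite of such, one deduces that every degree-$k$ graded endomorphism in $\mathbb{T}^\bullet$ arising as cup-product-with-a-class is a product of operators of degrees $0$ and $1$.

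\medskip

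Then I would assemble the argument: an arbitrary element of $\mathbb{T}^\bullet$ is, by Definition~\ref{definition:derived:hecke:algebra}, a sum of products $h_{z_1,\alpha_1}\cdots h_{z_n,\alpha_n}$; by graded commutativity (Proposition~\ref{hecke:algebra:commutative}) and Proposition~\ref{computation:hecke:operator}, such a product acts as $c \mapsto \big(\prod_i i_{v_i,\mathfrak{N}}^*(\alpha_i)\big)\cup (\text{translate})(c)$, i.e.\ as cup product with a single cohomology class of degree $\sum_i \deg\alpha_i$ followed by a $\mathbb{T}^0$-operator. Because $H^\bullet(Y(\mathfrak{N}),\mathbb{F}_p)$ is generated in degrees $0,1$ and has the structure described above, any homogeneous class of degree $k$ is a sum of $k$-fold cup products of degree-$1$ classes (when $p=2$ one must be slightly careful: $H^\bullet$ may contain a polynomial generator in degree $1$, but then its square lies in degree $2$ and one only needs degrees $0,1,2$ — this is exactly why the statement allows $\mathbb{T}^2$ and not just $\mathbb{T}^0,\mathbb{T}^1$). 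So every element of $\mathbb{T}^k$ is a product of elements from $\mathbb{T}^0$, $\mathbb{T}^1$, and — to cover the degree-$2$ polynomial generator appearing when $2\mid p$, or more generally the Bockstein-type class — $\mathbb{T}^2$.

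\medskip

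The main obstacle I anticipate is the careful bookkeeping of the ring structure of $H^\bullet(E(\mathfrak{N}),\mathbb{F}_p)$ in small characteristic: while over a field of characteristic $0$ or coprime to the relevant torsion the cohomology of the $r$-torus is the exterior algebra on $r$ degree-one generators (hence generated in degree $\le 1$), for $p$ dividing the order of $E(\mathfrak{N})^{\mathrm{ab}}$-torsion one picks up a divided-power/polynomial factor, and the genuinely new ring generators sit in degree $2$ — this is precisely the reason $\mathbb{T}^2$ cannot be dropped from the statement. Establishing that no generators are needed beyond degree $2$ amounts to the standard computation that $H^\bullet(B\mathbb{Z}/p^m,\mathbb{F}_p)$ is generated by its degree-$1$ and degree-$2$ parts (polynomial on a degree-$2$ class tensor exterior on a degree-$1$ class), combined with the Künneth formula across the finitely many torus components; I would isolate this as the one real lemma and reduce everything else to the formal manipulations with $h_{z,\alpha}$ above.
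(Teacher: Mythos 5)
Your proposal misidentifies where the degree-two generator of $\mathbb{T}^\bullet$ comes from, and this is not a cosmetic slip: it leads you to an argument that does not actually establish the proposition.

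The group $E(\mathfrak{N})$ is free abelian of rank $r$ (this is implicit in Proposition~\ref{property:torus}, which says $E(\mathfrak{N})$ acts \emph{freely} on $F_{\mathbb{R},+}^\times/U$ and the quotient is an $r$-torus). Consequently $H^\bullet(Y(\mathfrak{N}),\mathbb{F}_p)$ is a finite product of exterior algebras on $r$ degree-one generators for \emph{every} prime $p$; there is no divided-power or polynomial factor, and no new ring generator in degree two, regardless of whether $p=2$. Your ``main obstacle'' paragraph, attributing the need for $\mathbb{T}^2$ to torsion in $E(\mathfrak{N})$ and invoking $H^\bullet(B\mathbb{Z}/p^m,\mathbb{F}_p)$ as a factor of $H^\bullet(Y(\mathfrak{N}),\mathbb{F}_p)$, is therefore off target. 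The finite cyclic group whose $\mathbb{F}_p$-cohomology matters is not a torsion part of $E(\mathfrak{N})$ but rather $\kappa_v^\times$ for $v\in T_1$: that group has order divisible by $p$, so $H^\bullet(\kappa_v^\times,\mathbb{F}_p)\cong\mathbb{F}_p[\alpha,\beta]/(\alpha^2)$ with $\deg\alpha=1$, $\deg\beta=2$, and $\beta$ is a genuinely new generator there. This is the source of $\mathbb{T}^2$. The paper's proof goes directly through this: $\mathbb{T}^\bullet$ is generated by the images of $\mathcal{H}_{v,R}^\bullet\cong R[F_v^\times/O_v^\times]\otimes_R H^\bullet(O_v^\times,R)$ via \eqref{explicit:description}, and since $H^\bullet(O_v^\times,\mathbb{F}_p)\cong H^\bullet(\kappa_v^\times,\mathbb{F}_p)$ is generated in degrees $\le 2$, each $\mathbb{T}_{v}^\bullet$ (hence $\mathbb{T}^\bullet$) is generated in degrees $0,1,2$.

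There is also a logical gap in the middle of your argument. You pass from ``$H^\bullet(Y(\mathfrak{N}),\mathbb{F}_p)$ is generated in degrees $\le 1$'' to ``every element of $\mathbb{T}^k$ is a product of elements from $\mathbb{T}^0,\mathbb{T}^1,\mathbb{T}^2$.'' But the derived Hecke operator $h_{1,\alpha}$ is cup product with $\langle\alpha\rangle$, and not every degree-one class in $H^1(Y(\mathfrak{N}),\mathbb{F}_p)$ is of the form $\langle\alpha'\rangle$. So even though a class $\langle\alpha\rangle$ of degree $k\ge 2$ is a sum of products of degree-one classes in $H^\bullet(Y)$, the degree-one factors in that decomposition need not come from the derived Hecke algebra, and the factorization does not lift to $\mathbb{T}^\bullet$. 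The factorization you want must happen upstairs in $H^\bullet(\kappa_v^\times,\mathbb{F}_p)$, where $\langle\cdot\rangle$ is a ring homomorphism, and this is precisely what the ring structure $\mathbb{F}_p[\alpha,\beta]/(\alpha^2)$ supplies. Finally, your reference to Grunwald--Wang is misplaced here: Proposition~\ref{property:hecke:operator:2} is unconditional and does not need any spanning statement for the image of the $\langle\alpha\rangle$ in $H^1(Y(\mathfrak{N}),\mathbb{F}_p)$; Grunwald--Wang enters only later, in the proof of Lemma~\ref{dimension:lemma} for Theorem~A.
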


\begin{proof} 
Let $v\in T_1$. Let us recall that $H^j(O_v^\times,\mathbb{F}_p)\cong H^j(\kappa_v^\times,\mathbb{F}_p)\cong \mathbb{F}_p$.  
From the periodicity (see Artin-Tate \cite[Chapter Preliminary, \textsection\,2, Theorem B]{artin2008class}) and the graded commutativity of the cup product on the cohomology ring of cyclic groups, the ring structure of $H^\bullet(O_v^\times,\mathbb{F}_p)$ is given by 
$$
H^\bullet(O_v^\times,\mathbb{F}_p)\cong H^\bullet(\kappa_v^\times,\mathbb{F}_p)\cong\mathbb{F}_p[\alpha,\beta]/(\alpha^2),
$$
where $\alpha$ and $\beta$ map to generators of $H^1(O_v^\times,\mathbb{F}_p)$ and $H^2(O_v^\times,\mathbb{F}_p)$, respectively.
Consequently, by definition and the isomorphism \eqref{explicit:description}, $\mathbb{T}_v^0$, $\mathbb{T}_v^1$, and $\mathbb{T}_v^2$ generate $\mathbb{T}_v^\bullet$ as a graded $R$-algebra. Recalling $\mathbb{T}^\bullet$ is generated by $\mathbb{T}_v^\bullet$ for $v\in T_0$, the proof is completed.
\end{proof}

Recall the map $\Psi$ introduced in \eqref{eq:psi}.
By Proposition \ref{hecke:algebra:commutative} (2), $\Psi$ is $\mathbb{T}^0$-linear.
Now we prove that $\Psi$ is a $\mathbb{T}^0$-linear isomorphism.
Define $t_p\geq 0$ to be the dimension of the following subspace
$$
\sum_{v\in T_1}i_{v,\mathfrak{N}}^*\mathrm{Hom}(\kappa_v^\times,\mathbb{F}_p)\subset\mathrm{Hom}(E(\mathfrak{N}),\mathbb{F}_p).
$$

Let $h_F^+(\mathfrak{N})$ be the cardinality of $\mathrm{Cl}_F^+(\mathfrak{N})$.

\begin{prop}\label{dimension:prop:2}
The image of $\Psi$ has dimension $h_F^+(\mathfrak{N})\cdot t_p$.
\end{prop}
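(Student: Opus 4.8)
The plan is to compute the dimension of $\mathrm{Im}(\Psi)$ by decomposing $H^1(Y(\mathfrak N),\mathbb F_p)$ according to the fibres of the bundle $Y(\mathfrak N)\to \mathrm{Cl}_F^+(\mathfrak N)$ from Proposition~\ref{idelic:torus:decomposition}, and then identifying, on each fibre, the subspace hit by the classes $\langle\alpha\rangle$ for $\alpha\in\mathrm{Hom}(\kappa_v^\times,\mathbb F_p)$, $v\in T_1$. First I would observe that by Proposition~\ref{Hecke:module:structure}(1) and (3), $\mathbb T^1\otimes_{\mathbb T^0}H^0(Y(\mathfrak N),\mathbb F_p)$ is spanned by elements of the form $h_{1,\alpha}\otimes 1_a$ with $a\in C(\mathfrak N)$ and $\alpha$ ranging over $\mathrm{Hom}(O_v^\times,\mathbb F_p)$ for $v\in T_0$; moreover $\mathrm{Hom}(O_v^\times,\mathbb F_p)=\mathrm{Hom}(\kappa_v^\times,\mathbb F_p)$, which is nonzero precisely when $v\in T_1$. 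So $\mathrm{Im}(\Psi)$ is the $\mathbb F_p$-span of the classes $\Psi(h_{1,\alpha}\otimes 1_a)=h_{1,\alpha}1_a=\langle\alpha\rangle\cup 1_a$ for $v\in T_1$, $\alpha\in\mathrm{Hom}(\kappa_v^\times,\mathbb F_p)$, $a\in C(\mathfrak N)$.

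Next I would use Remark~\ref{representative:choice}: $H^\bullet(Y(\mathfrak N),\mathbb F_p)\cong\bigoplus_{a\in C(\mathfrak N)}H^\bullet(E(\mathfrak N)\backslash F_{\mathbb R,+}^\times/U,\mathbb F_p)$ via $\bigsqcup_a\iota_a^*$, and that $1_a$ corresponds to the unit on the $a$-component and $0$ on the others (the relation $\iota_b^*(1_a)=\delta_{ab}$). Since cup product with $1_a$ is just restriction to the $a$-component, $\langle\alpha\rangle\cup 1_a$ corresponds, under $\iota_a^*$, to $\iota_a^*\langle\alpha\rangle$ on the $a$-component and to $0$ elsewhere. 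By the commutative diagram~\eqref{cohomology:commutative:diagram} (equivalently Proposition~\ref{computation:hecke:operator} with $z=1$), $\iota_a^*\langle\alpha\rangle=i_{v,\mathfrak N}^*(\alpha)$ under the identification $H^\bullet(E(\mathfrak N),\mathbb F_p)\xrightarrow{\cong}H^\bullet(E(\mathfrak N)\backslash F_{\mathbb R,+}^\times/U,\mathbb F_p)$. Crucially, $i_{v,\mathfrak N}^*(\alpha)$ does not depend on $a$. Hence the $a$-component of $\mathrm{Im}(\Psi)$ is exactly the subspace
$$
V:=\sum_{v\in T_1}i_{v,\mathfrak N}^*\mathrm{Hom}(\kappa_v^\times,\mathbb F_p)\subset\mathrm{Hom}(E(\mathfrak N),\mathbb F_p)\cong H^1(E(\mathfrak N)\backslash F_{\mathbb R,+}^\times/U,\mathbb F_p),
$$
and these subspaces sit in distinct direct summands indexed by $a\in C(\mathfrak N)$. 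Therefore $\mathrm{Im}(\Psi)=\bigoplus_{a\in C(\mathfrak N)}V_a$ where each $V_a\cong V$ has dimension $t_p$, giving $\dim_{\mathbb F_p}\mathrm{Im}(\Psi)=|C(\mathfrak N)|\cdot t_p=h_F^+(\mathfrak N)\cdot t_p$.

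The one point requiring a little care — and the likeliest source of a subtle error — is the compatibility of the identification $H^1(E(\mathfrak N),\mathbb F_p)\cong\mathrm{Hom}(E(\mathfrak N),\mathbb F_p)$ with the map $i_{v,\mathfrak N}^*$ and with the isomorphism to $H^1$ of the torus: one must check that $i_{v,\mathfrak N}^*$ on $H^1$ is literally the transpose of the group homomorphism $E(\mathfrak N)\to\kappa_v^\times$ (this is standard for $H^1(-,\mathbb F_p)=\mathrm{Hom}(-,\mathbb F_p)$ on abelian groups, and functoriality in the diagram~\eqref{cohomology:commutative:diagram} is what transports it to the torus), so that the span of the images $i_{v,\mathfrak N}^*\mathrm{Hom}(\kappa_v^\times,\mathbb F_p)$ is exactly the space whose dimension was defined to be $t_p$. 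Granting that bookkeeping, the dimension count is forced, since the direct-sum decomposition over $C(\mathfrak N)$ makes the contributions from different ray classes linearly independent.
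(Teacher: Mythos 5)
Your argument is correct and follows essentially the same route as the paper's proof. Both reduce $\mathrm{Im}(\Psi)$ to the span of the classes $h_{1,\alpha}1_a$, observe that contributions from distinct connected components indexed by $C(\mathfrak N)$ are linearly independent, and identify each component's share, via $\iota_a^*$ and Proposition~\ref{computation:hecke:operator}, with the $t_p$-dimensional subspace $\sum_v i_{v,\mathfrak N}^*\mathrm{Hom}(\kappa_v^\times,\mathbb F_p)\subset\mathrm{Hom}(E(\mathfrak N),\mathbb F_p)$.
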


\begin{proof} 
By Proposition \ref{Hecke:module:structure} (1), the image $\mathrm{Im}(\Psi)$ coincides with $\mathbb{T}^1 1_1$. By Proposition \ref{property:hecke:operator}, Proposition \ref{hecke:algebra:commutative}, and Proposition \ref{Hecke:module:structure} (2), (3),
$$
\mathbb{T}^1 1_1=\sum_{\alpha\in\bigsqcup_{v\in T_0}\mathrm{Hom}(E(\mathfrak{N}),\mathbb{F}_p)}\mathbb{T}^0h_{1,\alpha} 1_1=\sum_{a\in C(\mathfrak{N})}\sum_{\alpha}\mathbb{F}_ph_{1,\alpha} 1_a.
$$
The spaces $\mathbb{F}_ph_{1,\alpha} 1_a$ are linearly disjoint for $a\in C(\mathfrak{N})$ as $h_{1,\alpha} 1_a$'s are cohomology classes supported on the distinct connected components. Therefore, we obtain from the above equation that 
$$
\mathrm{dim}_{\mathbb{F}_p}\mathbb{T}^1 1_1=h_F^+(\mathfrak{N})\cdot \mathrm{dim}_{\mathbb{F}_p}\sum_{\alpha}\mathbb{F}_ph_{1,\alpha} 1_1.
$$
Under the isomorphism $\prod_{b\in C(\mathfrak{N})}\iota_b^*$, the image of $\sum_{\alpha}\mathbb{F}_ph_{1,\alpha} 1_1$ is isomorphic to
$$
\sum_{\alpha}\mathbb{F}_p i_{v_\alpha,\mathfrak{N}}^*(\alpha)\subset\mathrm{Hom}(E(\mathfrak{N}),\mathbb{F}_p)
$$ 
due to Proposition \ref{computation:hecke:operator}, where $v_\alpha$ is the prime ideal of $F$ such that $\alpha\in\mathrm{Hom}(O_{v_\alpha}^\times,\mathbb{F}_p)$. The dimension of the above space is $t_p$ by definition. So we are done.
\end{proof}

\begin{prop}\label{dimension:prop}
The dimension of $\mathbb{T}^1\otimes_{\mathbb{T}^0} H^0(Y(\mathfrak{N}),\mathbb{F}_p)$ is $h_F^+(\mathfrak{N})\cdot t_p$.
\end{prop}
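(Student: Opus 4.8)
The plan is to show that the map $\Psi$ of \eqref{eq:psi} is injective; combined with Proposition~\ref{dimension:prop:2} this immediately yields
\[
\dim_{\mathbb{F}_p}\bigl(\mathbb{T}^1\otimes_{\mathbb{T}^0}H^0(Y(\mathfrak{N}),\mathbb{F}_p)\bigr)=\dim_{\mathbb{F}_p}\ker\Psi+\dim_{\mathbb{F}_p}\mathrm{Im}(\Psi)=0+h_F^+(\mathfrak{N})\cdot t_p,
\]
which is the assertion. So the whole proposition reduces to checking that $\ker\Psi=0$.

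First I would reduce injectivity to a statement purely about $\mathbb{T}^1$. By Proposition~\ref{Hecke:module:structure}(1) every class in $H^0(Y(\mathfrak{N}),\mathbb{F}_p)$ has the form $t\,1_1$ with $t\in\mathbb{T}^0$, and in the tensor product $h\otimes t\,1_1=(ht)\otimes 1_1$ (note $\mathbb{T}^1\mathbb{T}^0\subseteq\mathbb{T}^1$ since $\mathbb{T}^\bullet$ is a $\mathbb{T}^0$-algebra, Proposition~\ref{hecke:algebra:commutative}); hence every element of $\mathbb{T}^1\otimes_{\mathbb{T}^0}H^0(Y(\mathfrak{N}),\mathbb{F}_p)$ equals $h\otimes 1_1$ for some $h\in\mathbb{T}^1$, and $\Psi$ sends it to $h\,1_1$. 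Thus it suffices to prove the following: if $h\in\mathbb{T}^1$ satisfies $h\,1_1=0$ in $H^1(Y(\mathfrak{N}),\mathbb{F}_p)$, then $h=0$ as an operator on $H^\bullet(Y(\mathfrak{N}),\mathbb{F}_p)$ (and therefore $h\otimes 1_1=0$).

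To prove this I would expand $h$ using Proposition~\ref{Hecke:module:structure}(2),(3) and the identity $h_{z,1}h_{1,\alpha}=h_{z,\alpha}$, writing $h=\sum_i c_i\,h_{z_i,\alpha_i}$ as a finite sum with $c_i\in\mathbb{F}_p$, $z_i\in\mathbb{A}_F^\times/\widehat{O}_F^\times$, and $\alpha_i\in\mathrm{Hom}(O_{v_i}^\times,\mathbb{F}_p)$ for $v_i\in T_0$. For $b\in C(\mathfrak{N})$ and an arbitrary class $c\in H^\bullet(Y(\mathfrak{N}),\mathbb{F}_p)$, Proposition~\ref{computation:hecke:operator} gives
\[
\iota_b^*(hc)=\sum_i c_i\,i_{v_i,\mathfrak{N}}^*(\alpha_i)\cup\iota_{z_i b}^*(c)=\sum_{a\in C(\mathfrak{N})}\gamma_{[a][b]^{-1}}\cup\iota_a^*(c),
\]
where I have used that $\iota_{z_i b}^*(c)=\iota_a^*(c)$ with $a\in C(\mathfrak{N})$ the representative of the ray class $[z_ib]$ (two such embeddings differ by a translation of the torus, which is homotopic to the identity, cf.\ Remark~\ref{representative:choice}), regrouped the sum accordingly, and set $\gamma_g:=\sum_{i:\,[z_i]=g}c_i\,i_{v_i,\mathfrak{N}}^*(\alpha_i)\in\mathrm{Hom}(E(\mathfrak{N}),\mathbb{F}_p)$ for $g\in\mathrm{Cl}_F^+(\mathfrak{N})$. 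Taking $c=1_1$ and using $\iota_a^*(1_1)=\delta_{a,1}$ gives $\iota_b^*(h\,1_1)=\gamma_{[b]^{-1}}$, so the hypothesis $h\,1_1=0$ is equivalent to $\gamma_g=0$ for every $g\in\mathrm{Cl}_F^+(\mathfrak{N})$. Feeding this back into the displayed identity makes $\iota_b^*(hc)=0$ for every $b$ and every $c$, and since $\bigsqcup_{b\in C(\mathfrak{N})}\iota_b^*$ is injective (Remark~\ref{representative:choice}) we get $hc=0$ for all $c$, i.e.\ $h=0$.

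The only substantive step is the regrouping in the displayed formula, and the point is structural: the derived Hecke operator $h_{z,\alpha}$ factors into a ``permutation part'' $\iota_{zb}^*$ that depends on $z$ only through its class in $\mathrm{Cl}_F^+(\mathfrak{N})$, and a ``coefficient part'' $i_{v,\mathfrak{N}}^*(\alpha)$ that is independent of $z$; consequently the finitely many classes $\iota_b^*(h\,1_1)$ already recover all the coefficient vectors $\gamma_g$, and these in turn control the action of $h$ in every cohomological degree simultaneously — hence $h\,1_1=0$ forces $h=0$, not merely $h|_{H^0}=0$. Everything else that is needed — the identity $h_{z,1}h_{1,\alpha}=h_{z,\alpha}$, the independence of $\iota_{za}^*$ (on cohomology) from the idelic representative of $[za]$, and the value $\dim_{\mathbb{F}_p}\mathrm{Im}(\Psi)=h_F^+(\mathfrak{N})\cdot t_p$ — is already supplied by Proposition~\ref{computation:hecke:operator}, Remark~\ref{representative:choice}, and Proposition~\ref{dimension:prop:2} respectively, so no further obstacle is expected.
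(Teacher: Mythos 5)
Your proof is correct, but it takes a genuinely different route from the paper's.

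The paper's strategy is a two-sided dimension count: from Proposition~\ref{Hecke:module:structure}(3) it extracts a finite set $A$ with $\mathbb{T}^1=\sum_{\alpha\in A}\mathbb{T}^0 h_{1,\alpha}$ and $\{h_{1,\alpha}\}_{\alpha\in A}$ linearly independent; it then shows (by essentially the same $\iota_b^*$-computation you use) that $\{i_{v,\mathfrak{N}}^*(\alpha)\}_{\alpha\in A}$ is linearly independent, hence $|A|\le t_p$, hence $\dim\bigl(\mathbb{T}^1\otimes_{\mathbb{T}^0}H^0\bigr)\le h_F^+(\mathfrak{N})\cdot t_p$; combined with the lower bound from Proposition~\ref{dimension:prop:2} (surjectivity of $\Psi$ onto a space of exactly that dimension) this gives equality, and injectivity of $\Psi$ emerges only afterward, as a consequence. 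You instead prove $\ker\Psi=0$ directly: you reduce every element of the tensor product to the form $h\otimes 1_1$, and then show the stronger structural fact that $h\,1_1=0$ already forces $h=0$ as an operator on all of $H^\bullet$, by decomposing $h$ into ``permutation part'' and ``coefficient part'' and observing that the coefficient vectors $\gamma_g$ are completely determined by $\{\iota_b^*(h\,1_1)\}_b$. The underlying ingredients are the same (Proposition~\ref{computation:hecke:operator}, the injectivity of $\bigsqcup_b\iota_b^*$, and Proposition~\ref{dimension:prop:2}), but the logical flow is reversed; your version makes the injectivity of $\Psi$ the centerpiece rather than a byproduct, and it explicitly isolates the fact that $\mathbb{T}^1$ is faithfully captured by its action on $1_1$. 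One small point to make explicit: when you replace $\iota_{z_ib}^*$ by $\iota_a^*$ for $a\in C(\mathfrak{N})$ representing $[z_ib]$, Remark~\ref{representative:choice} by itself only gives equality of images, not of pullbacks on cohomology; the justification is, as you note, that the two embeddings differ by multiplication by an element of $F_{\mathbb{R},+}^\times$ on the torus, which is homotopic to the identity since $F_{\mathbb{R},+}^\times$ is connected.
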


\begin{proof} 
Note that $\mathbb{T}^0 h_{1,\alpha}$ is a vector subspace of $\mathbb{T}^1$ for $\alpha\in\mathrm{Hom}(O_v^\times,\mathbb{F}_p)$ and $v\in T_1$. Also note that $\mathbb{T}^1$ is clearly a finite dimensional vector space.
Thus, by Proposition \ref{Hecke:module:structure} (3), there exists a finite subset $A$ of $\bigsqcup_{v\in T_0}\mathrm{Hom}(O_v^\times,\mathbb{F}_p)$ such that
$$
\mathbb{T}^1=\sum_{\alpha\in A}\mathbb{T}^0h_{1,\alpha}.
$$
If there is an element $\alpha\in A$ such that $h_{1,\alpha}=\sum_{r}c_rh_{1,\alpha_r}$ for some $c_r\in\mathbb{F}_p$ and $\alpha_r\in A$, then $\mathbb{T}^0h_{1,\alpha}=\mathbb{T}^0\sum_r c_r h_{1,\alpha_r}\subset\sum_r \mathbb{T}^0h_{1,\alpha_r}$. Therefore, we may assume that the set $\{h_{1,\alpha}\mid \alpha\in A\}$ is linearly independent.

We would like to show that $\{i_{v,\mathfrak{N}}^*(\alpha)\mid\alpha\in A\}$ is also linearly independent. Let us assume that $\sum_{\alpha\in A}c_\alpha i_{v,\mathfrak{N}}^*(\alpha)=0$ for some constants $c_\alpha$, then $\iota_b^*(h1_a)=0$ for any $a,b\in C(\mathfrak{N})$ by Proposition \ref{computation:hecke:operator}, where $h=\sum_{\alpha\in A}c_\alpha h_{1,\alpha}$. Therefore, $h=0$, which implies that $c_\alpha$'s are all zero. From this, we conclude that $|A|\leq t_p$.

Hence, by Proposition \ref{Hecke:module:structure} (1), $\mathbb{T}^1\otimes_{\mathbb{T}^0} H^0(Y(\mathfrak{N}),\mathbb{F}_p)$ is generated by $h_{z,1}h_{1,\alpha}\otimes 1_1$ for $z\in\mathbb{A}_F^\times/\widehat{O}_F^\times$ and $\alpha\in A$ as a $\mathbb{F}_p$-vector space. Note that $h_{z,1}h_{1,\alpha}\otimes 1_1=h_{1,\alpha}\otimes 1_{z^{-1}(1)}$ by the equation (\ref{hecke:action:representation}). 
Therefore, the dimension of $\mathbb{T}^1\otimes_{\mathbb{T}^0} H^0(Y(\mathfrak{N}),\mathbb{F}_p)$ is bounded above by
$$
h_F^+(\mathfrak{N}) \cdot |A|,
$$
and let us recall that $|A|\leq t_p$.
Therefore, by Proposition \ref{dimension:prop:2}, we obtain the desired result.
\end{proof}

From the above propositions, we conclude that the map $\Psi$ is injective. Since the dimension of $H^1(Y(\mathfrak{N}),\mathbb{F}_p)$ is $h_F^+(\mathfrak{N})\cdot r$, we have the implication: if  $t_p=r$, then the map $\Psi$ is an isomorphism.


To proceed, we will need the local-global principle for the existence of $p$-th roots. 
\begin{thm}\label{grunwald:wang:thm}
Let $\alpha$ be an element of $F^\times$. Then, if $\alpha\in (F_v^\times)^p$ for almost all primes $v$, then $\alpha\in (F^\times)^p$.
\end{thm}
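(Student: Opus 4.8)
My plan is to deduce the statement from the Kummer-theoretic case $\zeta_p\in F$, where it becomes a density statement accessible through the Chebotarev density theorem. Accordingly, set $F':=F(\zeta_p)$ and $d:=[F':F]$. Since $\mathrm{Gal}(F'/F)$ embeds into $(\mathbb{Z}/p\mathbb{Z})^\times$, we have $d\mid p-1$, hence $\gcd(d,p)=1$. If $v'$ is a place of $F'$ above a place $v$ of $F$ with $\alpha\in(F_v^\times)^p$, then $\alpha\in((F'_{v'})^\times)^p$ because $F_v\subseteq F'_{v'}$; thus the hypothesis ascends, and $\alpha$ is a $p$-th power in $F'_{v'}$ for all but finitely many places $v'$ of $F'$. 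Granting the case of $F'$, we obtain $\alpha=\beta^p$ with $\beta\in(F')^\times$. Applying $N:=N_{F'/F}$ and using $\alpha\in F^\times$ gives $\alpha^d=N(\alpha)=N(\beta)^p$; choosing $a,b\in\mathbb{Z}$ with $ad+bp=1$ then yields $\alpha=(\alpha^d)^a\cdot\alpha^{bp}=(N(\beta)^a\alpha^b)^p\in(F^\times)^p$, which completes the descent. It is this step that genuinely uses $\gcd(d,p)=1$.

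It remains to treat the case $\zeta_p\in F$. I would put $L:=F(\alpha^{1/p})$, which by Kummer theory is a cyclic extension of $F$ of degree dividing $p$, and let $S$ be the finite set of places of $F$ that divide $p$, ramify in $L$, or at which $\alpha$ fails to be a unit. For $v\notin S$, the place $v$ splits completely in $L$ if and only if $\mathrm{Frob}_v$ fixes $\alpha^{1/p}$, i.e.\ if and only if $\alpha^{1/p}\in F_v$, i.e.\ if and only if $\alpha\in(F_v^\times)^p$ --- the last equivalence using that $\zeta_p\in F\subseteq F_v$, so that one $p$-th root of $\alpha$ in $F_v$ forces all of them. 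By hypothesis $\alpha\in(F_v^\times)^p$ for all but finitely many $v$, so after enlarging $S$ by this finite set we find that \emph{every} $v\notin S$ splits completely in $L$. The Chebotarev density theorem says that the set of places of $F$ splitting completely in the Galois extension $L/F$ has density $1/[L:F]$; since its complement is finite, this forces $[L:F]=1$, that is, $\alpha\in(F^\times)^p$.

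The actual computations here are routine, and the only points needing a little care are that the exceptional set $S$ in the Kummer step remains finite (so that ``almost all $v$'' is preserved) and the norm-plus-B\'ezout descent recalled above. I do not expect to run into the notorious special case of the Grunwald--Wang theorem: that obstruction concerns realizing cyclic extensions of large $2$-power degree with prescribed behaviour at a finite set of places, whereas here we only need to rule out a single Kummer extension $F(\alpha^{1/p})/F$ of degree dividing $p$, for which the split-prime density argument is unconditional --- in particular, when $p=2$ one has $F'=F$ and the argument applies directly to $F(\sqrt{\alpha})/F$.
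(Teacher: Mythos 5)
Your proof is correct, and it is genuinely different from the paper's: the paper does not prove the statement at all, but simply cites Artin--Tate (Ch.\ X, \S 1, Thm 1), treating it as a black-box special case of the Grunwald--Wang theorem, whereas you give a self-contained two-step argument via Kummer theory and Chebotarev. Both steps check out. The norm-plus-B\'ezout descent from $F'=F(\zeta_p)$ to $F$ correctly exploits $\gcd([F':F],p)=1$, and the hypothesis does ascend to $F'$ (each exceptional place of $F$ lies under only finitely many places of $F'$). In the Kummer step, since $\zeta_p\in F$, the extension $L=F(\alpha^{1/p})$ is Galois cyclic of degree $1$ or $p$, and for $v$ away from your finite set $S$ one has the chain of equivalences $v$ splits completely in $L$ $\Leftrightarrow$ $\mathrm{Frob}_v$ fixes $\alpha^{1/p}$ $\Leftrightarrow$ $\alpha^{1/p}\in F_v$ $\Leftrightarrow$ $\alpha\in(F_v^\times)^p$, the last using $\zeta_p\in F_v$. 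The complement of the completely split set being finite then forces, by Chebotarev, $[L:F]=1$. Your closing remark is also accurate: the pathological ``special case'' of Grunwald--Wang only arises for $m$ divisible by $8$, so it is irrelevant here even when $p=2$. The trade-off between the two approaches is that the paper's citation gets the result with no work but imports the much heavier full statement (and its proof) of Grunwald--Wang, while your argument is essentially elementary given Chebotarev and would make the paper self-contained at the cost of a page; it also makes explicit, in a way the citation does not, exactly why the exceptional case is avoided.
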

\begin{proof}
It is a special case of the more general result for $n$-th roots, known as the Grunwald--Wang theorem. For its proof, see {\rm Artin-Tate \cite[Ch.\,X, \textsection\,1, Thm\,1]{artin2008class}}.
\end{proof}

Denote by $i_v\colon O_F^\times \to \kappa_v^\times$ the reduction map. 
From Theorem~\ref{grunwald:wang:thm}, we deduce the following lemma which will be crucial in the proofs of main theorems:
\begin{lem}\label{dimension:lemma}
There is a finite subset $S$ of $T_1$ such that the map
$$
\prod_{v\in S} i_{v}^*:\prod_{v\in S}\mathrm{Hom}(\kappa_v^\times,\mathbb{F}_p)\to\mathrm{Hom}(O_F^\times,\mathbb{F}_p)
$$
is an isomorphism.
\end{lem}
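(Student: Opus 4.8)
The plan is to realize $\mathrm{Hom}(O_F^\times,\mathbb{F}_p)$ as a union of the images of the maps $i_v^*$ over $v\in T_1$, and then extract a finite subset giving an isomorphism by a dimension count. First I would recall that $O_F^\times\cong \mu_F\times\mathbb{Z}^r$, so $\mathrm{Hom}(O_F^\times,\mathbb{F}_p)$ is a finite-dimensional $\mathbb{F}_p$-vector space, of dimension $r$ when $p\nmid|\mu_F|$ and $r+1$ otherwise; in any case it suffices to produce, for each nonzero $\varphi\in\mathrm{Hom}(O_F^\times,\mathbb{F}_p)$, a single prime $v\in T_1$ and a character $\chi\in\mathrm{Hom}(\kappa_v^\times,\mathbb{F}_p)$ with $i_v^*\chi=\varphi$, or at least to show that the union of the subspaces $\mathrm{Im}(i_v^*)$ is all of $\mathrm{Hom}(O_F^\times,\mathbb{F}_p)$. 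Given surjectivity of the union, a finite subcollection already spans, because the target is finite-dimensional; and then since $\mathrm{Hom}(\kappa_v^\times,\mathbb{F}_p)$ is $1$-dimensional for $v\in T_1$ (as $\kappa_v^\times$ is cyclic of order $N(v)-1$, divisible by $p$ exactly once in the relevant quotient), we may further thin $S$ so that the $i_v^*$ have independent $1$-dimensional images, forcing $\prod_{v\in S}i_v^*$ to be an isomorphism.

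The heart of the matter is therefore the surjectivity statement: every homomorphism $O_F^\times\to\mathbb{F}_p$ extends to one that factors through reduction modulo some prime $v$ with $p\mid N(v)-1$. Equivalently, writing $\varphi\colon O_F^\times\to\mathbb{Z}/p$, I want a prime $v$ split appropriately so that $i_v\colon O_F^\times\to\kappa_v^\times$ followed by the unique surjection $\kappa_v^\times\to\mathbb{Z}/p$ recovers $\varphi$. I would phrase this dually via Kummer theory: a character $\varphi$ of $O_F^\times$ with values in $\mathbb{Z}/p$ corresponds to a cyclic degree-$p$ (or trivial) extension behaviour, and the obstruction to realizing $\varphi$ through a single $i_v^*$ is exactly the obstruction handled by the Grunwald--Wang theorem (Theorem~\ref{grunwald:wang:thm}). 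Concretely: suppose no prime $v\in T_1$ induces a given $\varphi$ on some unit $\varepsilon\in O_F^\times$; one translates this into the assertion that $\varepsilon$ becomes a $p$-th power in $F_v^\times$ for almost all $v$, whence $\varepsilon\in(F^\times)^p$ by Theorem~\ref{grunwald:wang:thm}, and a $p$-th power unit is killed by every $\varphi$ --- contradiction unless $\varphi$ was already zero on $\varepsilon$. Carrying this out for a basis of $O_F^\times/(O_F^\times)^p$ simultaneously, using the Chebotarev density theorem to find a single $v$ that simultaneously prescribes the images of all basis elements while also lying in $T_1$, gives the surjectivity.

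I expect the main obstacle to be the simultaneous prescription step: it is not enough to handle one unit at a time, because $S$ must be finite and the maps $i_v^*$ must jointly surject. The clean way is to pass to the finite extension $L=F\big(\mu_p,\, O_F^{\times\,1/p}\big)$, note $\mathrm{Gal}(L/F(\mu_p))$ is an $\mathbb{F}_p$-vector space dual to $O_F^\times/(O_F^\times)^p$ (here Grunwald--Wang, or rather the injectivity $O_F^\times/(O_F^\times)^p\hookrightarrow \prod_v F_v^\times/(F_v^\times)^p$ that it yields, guarantees $L/F(\mu_p)$ has the expected degree), and then choose for each character $\varphi$ a Frobenius element realizing it; Chebotarev then produces primes $v$ with the prescribed Frobenius, automatically satisfying $p\mid N(v)-1$ because $\mu_p\subset L$ forces $v$ to split in $F(\mu_p)/F$. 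Taking one such $v$ for each element of a basis of $\mathrm{Hom}(O_F^\times,\mathbb{F}_p)$ yields a finite set $S$, and discarding redundancies gives the isomorphism. The only delicate point is checking that $L/F(\mu_p)$ really has full degree $p^{\dim}$, i.e.\ that the chosen units stay independent modulo $p$-th powers after adjoining $\mu_p$; this is precisely where Theorem~\ref{grunwald:wang:thm} is invoked, so the lemma does genuinely rest on it.
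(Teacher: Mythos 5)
Your proof is correct in outline, but it takes a genuinely different route from the paper. The paper works entirely on the unit-group side: it considers the map $\bigoplus_{v\in T_1} i_v\otimes 1\colon O_F^\times\otimes_{\mathbb Z}\mathbb F_p\to\bigoplus_{v\in T_1}\kappa_v^\times\otimes_{\mathbb Z}\mathbb F_p$, shows it is injective by observing that an element of its kernel lies in $(O_v^\times)^p$ for all $v\in T_0$ (the $p$-power map being bijective on $1+vO_v$) and then invoking Grunwald--Wang, and finally extracts a finite $S$ by pure linear algebra (an $r_p\times r_p$ minor of maximal rank), dualizing at the end. No Chebotarev and no Kummer theory appear in that argument. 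You instead work on the character/Galois side: you identify $\mathrm{Hom}(O_F^\times,\mathbb F_p)$ with $\mathrm{Gal}\bigl(F(\mu_p,O_F^{\times 1/p})/F(\mu_p)\bigr)$ via Kummer theory and realize each nonzero character as a Frobenius via Chebotarev, which yields the stronger statement that every nonzero $\varphi$ lies in $\mathrm{Im}(i_v^*)$ for a \emph{single} $v\in T_1$. This is a legitimate and classical alternative, but note that your stated use of Grunwald--Wang is misplaced: the fact that $[L:F(\mu_p)]=p^{r_p}$, i.e.\ that $O_F^\times\cap(F(\mu_p)^\times)^p=(O_F^\times)^p$, follows from the elementary norm argument (as $[F(\mu_p):F]$ is prime to $p$), not from Grunwald--Wang; and the local-global input one might try to extract from split primes alone would not satisfy the ``almost all $v$'' hypothesis of Theorem~\ref{grunwald:wang:thm} anyway. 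In effect your route replaces Grunwald--Wang by Chebotarev, whereas the paper's route genuinely rests on Grunwald--Wang and avoids Chebotarev. Your first ``translation'' sketch (that failure of every $i_v^*$ to hit $\varphi$ forces $\varepsilon\in(F_v^\times)^p$ for almost all $v$) is not correct as stated --- $i_v^*$ can be nonzero yet not proportional to $\varphi$ --- but you discard it in favor of the Kummer/Chebotarev argument, which is sound. Finally, the extraction of a finite $S$ by thinning to independent one-dimensional images is the same linear-algebra step as in the paper and is fine.
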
 
\begin{proof}
Note that each element of $O_F^\times\otimes_\mathbb{Z}\mathbb{F}_p\cong O_F^\times/(O_F^\times)^p$ can be represented by $\varepsilon\otimes 1$ for some $\varepsilon\in O_F^\times$.
Let $\varepsilon\otimes 1\in O_F^\times\otimes_\mathbb{Z}\mathbb{F}_p$. 
Let us consider the map 
$$
\bigoplus_{v\in T_1}i_{v}\otimes 1: O_F^\times\otimes_\mathbb{Z}\mathbb{F}_p\rightarrow\bigoplus_{v\in T_1}\kappa_v^\times\otimes_\mathbb{Z}\mathbb{F}_p
$$
induced from the maps $i_{v}$ for $v\in T_1$.
If $\varepsilon\otimes 1$ is trivial under this map, then $\varepsilon\in (\kappa_v^\times)^p$ for $v\in T_0$ as $(\kappa_v^\times)^p=\kappa_v^\times$ if $v\in T_0-T_1$. Since the $p$-power map is an automorphism on $1+vO_v$ if $p\nmid N(v)$, we observe that $\varepsilon\in (O_v^\times)^p$ for $v\in T_0$. Since the complement of $T_0$ is a finite set, $\varepsilon\in (F^\times)^p\cap O_F^\times=(O_F^\times)^p$ by Theorem \ref{grunwald:wang:thm}, which implies that
$$
1 \xrightarrow{}  O_F^\times\otimes_\mathbb{Z}\mathbb{F}_p \xrightarrow{\oplus_{v}i_v\otimes 1}  \bigoplus_{v\in T_1}\kappa_v^\times\otimes_\mathbb{Z}\mathbb{F}_p 
$$
is exact. Since each $\kappa_v^\times\otimes_\mathbb{Z}\mathbb{F}_p$ is one dimensional, we can find a finite subset $S$ of $T_1$ such that the map $\bigoplus_{v\in S}i_v\otimes 1$ is an isomorphism. Taking the functor $\mathrm{Hom}(-,\mathbb{F}_p)$, the assertion of the lemma follows.
\end{proof}

Let $r_p$ denote the $\mathbb F_p$-dimension of $\operatorname{Hom}(O_F^\times, \mathbb F_p)$. Note that $r_p=r$ unless $F$ contains a primitive $p$-th root of unity, in which case $r_p=r+1$.
Denote by $\delta_p$ the $\mathbb F_p$-dimension of $\mathrm{Hom}(O_F^\times/E(\mathfrak{N}),\mathbb{F}_p)$. 

\begin{lem}
Image of the map 
$$
\mathrm{Hom}(O_F^\times,\mathbb{F}_p)\to\mathrm{Hom}(E(\mathfrak{N}),\mathbb{F}_p),
$$
induced by the inclusion $E(\mathfrak{N})\subset O_F^\times$, has dimension
$r_p-\delta_p$.
\end{lem}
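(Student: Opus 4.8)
The plan is to realize the map of the lemma as a piece of the long exact $\mathrm{Hom}$-sequence attached to the short exact sequence of abelian groups
$$
1 \longrightarrow E(\mathfrak{N}) \longrightarrow O_F^\times \longrightarrow O_F^\times/E(\mathfrak{N}) \longrightarrow 1,
$$
whose first arrow is the inclusion appearing in the statement. First I would record that, since $E(\mathfrak{N})$ and $O_F^\times$ both have $\mathbb{Z}$-rank $r$ (Proposition~\ref{property:torus} together with the Dirichlet unit theorem), the quotient $O_F^\times/E(\mathfrak{N})$ is a \emph{finite} abelian group. Consequently all the $\mathrm{Hom}(-,\mathbb{F}_p)$-groups below are finite-dimensional $\mathbb{F}_p$-vector spaces, of dimensions $r_p$, $\delta_p$ (and $\dim_{\mathbb F_p}\mathrm{Hom}(E(\mathfrak N),\mathbb F_p)=r_p$ as well, though this last fact is not needed).

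Next I would apply the left-exact contravariant functor $\mathrm{Hom}(-,\mathbb{F}_p)$ to the sequence above, obtaining the exact sequence
$$
0 \longrightarrow \mathrm{Hom}(O_F^\times/E(\mathfrak{N}),\mathbb{F}_p) \longrightarrow \mathrm{Hom}(O_F^\times,\mathbb{F}_p) \xrightarrow{\ \mathrm{res}\ } \mathrm{Hom}(E(\mathfrak{N}),\mathbb{F}_p),
$$
in which $\mathrm{res}$ is precisely the restriction map of the lemma. The one identification to make explicit is that $\ker(\mathrm{res})$ consists of those homomorphisms $O_F^\times\to\mathbb{F}_p$ that vanish on $E(\mathfrak{N})$, equivalently those that factor through $O_F^\times/E(\mathfrak{N})$; hence $\ker(\mathrm{res})=\mathrm{Hom}(O_F^\times/E(\mathfrak{N}),\mathbb{F}_p)$, which by definition has $\mathbb{F}_p$-dimension $\delta_p$.

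Finally, rank–nullity applied to the $\mathbb{F}_p$-linear map $\mathrm{res}$ yields
$$
\dim_{\mathbb{F}_p}\mathrm{Im}(\mathrm{res}) = \dim_{\mathbb{F}_p}\mathrm{Hom}(O_F^\times,\mathbb{F}_p) - \dim_{\mathbb{F}_p}\ker(\mathrm{res}) = r_p - \delta_p,
$$
which is the asserted value. I do not expect any genuine obstacle: the argument is pure linear and homological algebra once the finiteness of $O_F^\times/E(\mathfrak{N})$ is granted, and that finiteness is already implicit in the paper's description of $Y(\mathfrak{N})$ as a finite disjoint union of $r$-tori.
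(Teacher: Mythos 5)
Your proposal is correct and takes essentially the same route as the paper: both produce the left-exact $\mathrm{Hom}(-,\mathbb{F}_p)$ sequence attached to $1\to E(\mathfrak{N})\to O_F^\times\to O_F^\times/E(\mathfrak{N})\to 1$ and read off the dimension of the image. Your write-up merely makes explicit the rank--nullity step and the finiteness of $O_F^\times/E(\mathfrak{N})$, which the paper leaves implicit.
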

\begin{proof}

We have the following exact sequence:
$$
\begin{tikzcd}
1 \arrow{r} & \mathrm{Hom}(O_F^\times/E(\mathfrak{N}),\mathbb{F}_p) \arrow{r} &  \mathrm{Hom}(O_F^\times,\mathbb{F}_p) \arrow{r} & \mathrm{Hom}(E(\mathfrak{N}),\mathbb{F}_p),
\end{tikzcd}
$$
which is induced by the inclusion $E(\mathfrak{N})\subset O_F^\times$.
So we are done.
\end{proof}

\begin{remark}
Since $O_F^\times/E(\mathfrak{N})$ is a finite abelian group, $\delta_p>0$ if and only if $p$ divides $\left |O_F^\times/E(\mathfrak{N})\right |$. In particular, when $r>0$ and $p$ does not divide $\left | O_F^\times/E(\mathfrak{N}) \right|$, we have $r_p-\delta_p >0$. 
\end{remark}

Now we can prove the following theorems:
\begin{thm}\label{spectral:degeneracy}
If $\delta_p<r_p$, then any Hecke eigensystems in $H^0(Y(\mathfrak{N}),\overline{\mathbb{F}}_p)$ occurs again in $H^1(Y(\mathfrak{N}),\overline{\mathbb{F}}_p)$. 
\end{thm}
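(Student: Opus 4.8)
The plan is to exhibit inside $\mathrm{Im}(\Psi)\subseteq H^1(Y(\mathfrak{N}),\mathbb{F}_p)$ a $\mathbb{T}^0$-submodule isomorphic, as a $\mathbb{T}^0$-module, to $H^0(Y(\mathfrak{N}),\mathbb{F}_p)$. Once this is done, any Hecke eigensystem occurring in $H^0$ automatically occurs in that submodule and hence in $H^1$; passing to $\overline{\mathbb{F}}_p$-coefficients then yields the theorem.

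First I would rephrase the hypothesis. The map $i_{v,\mathfrak{N}}\colon E(\mathfrak{N})\to\kappa_v^\times$ is the restriction of $i_v\colon O_F^\times\to\kappa_v^\times$, so $i_{v,\mathfrak{N}}^*$ is the composite of $i_v^*$ with the restriction $\mathrm{Hom}(O_F^\times,\mathbb{F}_p)\to\mathrm{Hom}(E(\mathfrak{N}),\mathbb{F}_p)$. Since the $i_v^*$ for $v\in T_1$ already span $\mathrm{Hom}(O_F^\times,\mathbb{F}_p)$ by Lemma \ref{dimension:lemma}, the subspace defining $t_p$ equals the image of that restriction map, whose dimension is $r_p-\delta_p$ by the lemma just above. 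Thus $t_p=r_p-\delta_p$, and $\delta_p<r_p$ is equivalent to $t_p\ge 1$. Assuming this, I would fix $v\in T_1$ and $\alpha\in\mathrm{Hom}(\kappa_v^\times,\mathbb{F}_p)$ with $i_{v,\mathfrak{N}}^*(\alpha)\ne 0$, and set
\[
\omega:=h_{1,\alpha}1_1=\Psi(h_{1,\alpha}\otimes 1_1)\in\mathrm{Im}(\Psi)\subseteq H^1(Y(\mathfrak{N}),\mathbb{F}_p);
\]
by Proposition \ref{computation:hecke:operator}, $\iota_1^*(\omega)=i_{v,\mathfrak{N}}^*(\alpha)\ne 0$, so $\omega\ne 0$.

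The heart of the argument is to show $\mathrm{Ann}_{\mathbb{T}^0}(\omega)=\mathrm{Ann}_{\mathbb{T}^0}(1_1)$. Because $\mathbb{T}^0$ is central in $\mathbb{T}^\bullet$ (Proposition \ref{hecke:algebra:commutative}(2)) and $\omega=h_{1,\alpha}1_1$, the inclusion $\mathrm{Ann}_{\mathbb{T}^0}(1_1)\subseteq\mathrm{Ann}_{\mathbb{T}^0}(\omega)$ is immediate. For the reverse I would write a general element of $\mathbb{T}^0$ as $T=\sum_z c_z h_{z,1}$ (Proposition \ref{Hecke:module:structure}(2)); then $T1_1=\sum_z c_z 1_{z^{-1}(1)}$ by \eqref{hecke:action:representation}, while $T\omega=\sum_z c_z\,h_{1,\alpha}1_{z^{-1}(1)}$ since $\mathbb{T}^0$ commutes with $h_{1,\alpha}$. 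By Proposition \ref{computation:hecke:operator} each class $h_{1,\alpha}1_b$ is supported on the single connected component $\mathrm{Im}(\iota_b)$, with $\iota_b^*(h_{1,\alpha}1_b)=i_{v,\mathfrak{N}}^*(\alpha)\ne 0$; hence both families $\{h_{1,\alpha}1_b\}_{b\in C(\mathfrak{N})}$ and $\{1_b\}_{b\in C(\mathfrak{N})}$ are linearly independent. Grouping the two sums by the value of $z^{-1}(1)\in C(\mathfrak{N})$, the vanishing of $T\omega$ forces each partial sum $\sum_{z:\,z^{-1}(1)=b}c_z$ to vanish, and therefore $T1_1=0$. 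This gives $\mathrm{Ann}_{\mathbb{T}^0}(\omega)=\mathrm{Ann}_{\mathbb{T}^0}(1_1)$, whence
\[
\mathbb{T}^0\omega\;\cong\;\mathbb{T}^0/\mathrm{Ann}_{\mathbb{T}^0}(\omega)\;=\;\mathbb{T}^0/\mathrm{Ann}_{\mathbb{T}^0}(1_1)\;\cong\;\mathbb{T}^0 1_1\;=\;H^0(Y(\mathfrak{N}),\mathbb{F}_p)
\]
as $\mathbb{T}^0$-modules, the last equality being Proposition \ref{Hecke:module:structure}(1).

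To finish, note that $\mathbb{T}^0\omega$ is a $\mathbb{T}^0$-submodule of $\mathrm{Im}(\Psi)\subseteq H^1(Y(\mathfrak{N}),\mathbb{F}_p)$ (as $\Psi$ is $\mathbb{T}^0$-linear); tensoring the displayed isomorphism with $\overline{\mathbb{F}}_p$ over $\mathbb{F}_p$ realizes $H^0(Y(\mathfrak{N}),\overline{\mathbb{F}}_p)$ as a $\mathbb{T}^0$-submodule of $H^1(Y(\mathfrak{N}),\overline{\mathbb{F}}_p)$, so every Hecke eigensystem occurring in $H^0(Y(\mathfrak{N}),\overline{\mathbb{F}}_p)$ occurs in $H^1(Y(\mathfrak{N}),\overline{\mathbb{F}}_p)$. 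I expect the only genuinely delicate point to be the claim that the degree-one classes $h_{1,\alpha}1_b$ are supported on pairwise disjoint connected components (and the resulting linear independence used to transport the annihilator); everything else is bookkeeping with the explicit formulas of \textsection\ref{section:hecke}, and the role of the hypothesis is exactly to guarantee, via $t_p\ge 1$, the existence of a single such $\omega$.
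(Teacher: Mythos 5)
Your proof is correct and relies on exactly the same ingredients as the paper's: Lemma~\ref{dimension:lemma} (hence Grunwald--Wang) together with the hypothesis $\delta_p<r_p$ to produce a pair $(v,\alpha)$ with $i_{v,\mathfrak N}^*(\alpha)\neq 0$, the component-wise formula of Proposition~\ref{computation:hecke:operator} to control the resulting degree-one classes, and the centrality of $\mathbb T^0$ (Proposition~\ref{hecke:algebra:commutative}) to carry the eigensystem along. The paper proceeds more directly: starting from an eigenvector $c\in H^0(Y(\mathfrak N),\overline{\mathbb F}_p)$ for a system $\lambda$, it writes $c$ as a linear combination of the $1_a$'s, observes via Proposition~\ref{computation:hecke:operator} that $h_{1,\alpha}c\neq 0$, and concludes immediately since $h_0 h_{1,\alpha}c=h_{1,\alpha}h_0 c=\lambda(h_0)h_{1,\alpha}c$. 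You instead prove the stronger structural statement that $c\mapsto h_{1,\alpha}c$ is an injective $\mathbb T^0$-module map $H^0\hookrightarrow H^1$, via the annihilator equality $\mathrm{Ann}_{\mathbb T^0}(\omega)=\mathrm{Ann}_{\mathbb T^0}(1_1)$; this is a clean way to package the same computation and makes the recurrence of eigensystems follow formally from base change. Both routes are sound; yours costs a little more bookkeeping (the linear-independence and annihilator argument) but yields an explicit $\mathbb T^0$-module copy of $H^0$ sitting inside $\mathrm{Im}(\Psi)\subseteq H^1$, which is a mild strengthening of the theorem as stated.
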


\begin{proof}
We consider the derived Hecke action with coefficients in $\overline{\mathbb{F}}_p$.
The result will be equal to the base change of the derived Hecke action with $\mathbb F_p$-coefficients, as explained in \cite[\textsection 2.12]{venkatesh2016derived}.
Also note that the natural map induces an isomorphism $H^j(Y(\mathfrak{N}),\overline{\mathbb{F}}_p)\simeq H^j(Y(\mathfrak{N}),\mathbb{F}_p)\otimes_{\mathbb{F}_p}\overline{\mathbb{F}}_p$, because $\mathbb F_p \to \overline{\mathbb{F}}_p$ is flat.

Let $\lambda:\mathbb{T}^0\rightarrow\overline{\mathbb{F}}_p$ be a Hecke eigensystem on $H^0(Y(\mathfrak{N}),\overline{\mathbb{F}}_p)$ and $c\in H^0(Y(\mathfrak{N}),\overline{\mathbb{F}}_p)$ an eigenvector for the given Hecke eigensystem. 
Note that $c$ can be written as a linear combination of $1_a$'s over $\overline{\mathbb{F}}_p$. By Lemma \ref{dimension:lemma}, there exists $v\in T_1$ such that $i_{v,\mathfrak{N}}^*(\alpha)\neq 0$ for some $\alpha\in H^1(O_v^\times,\mathbb{F}_p)$, so that $h_{1,\alpha}c\neq 0$ by Proposition \ref{computation:hecke:operator} and $h_{1,\alpha}c\in H^1(Y(\mathfrak{N}),\overline{\mathbb{F}}_p)$. Then, for any $h_0\in\mathbb{T}^0$,  $h_0h_{1,\alpha}c=h_{1,\alpha}h_0c=\lambda(h_0)h_{1,\alpha}c$ by Proposition \ref{hecke:algebra:commutative}. So we are done.
\end{proof}


\begin{thm}\label{modp:nonvanishing}
The dimension $t_p$ is equal to $r_p-\delta_p$. In particular, the map $\Psi$ is an isomorphism if and only if $\delta_p=r_p-r$.
\end{thm}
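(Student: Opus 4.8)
The plan is to compute $t_p$ directly from its definition as the dimension of the subspace $\sum_{v\in T_1} i_{v,\mathfrak{N}}^*\mathrm{Hom}(\kappa_v^\times,\mathbb{F}_p)$ of $\mathrm{Hom}(E(\mathfrak{N}),\mathbb{F}_p)$, and to identify this subspace with the image of the restriction map $\mathrm{Hom}(O_F^\times,\mathbb{F}_p)\to\mathrm{Hom}(E(\mathfrak{N}),\mathbb{F}_p)$, whose dimension we already know to be $r_p-\delta_p$ by the penultimate lemma. The key observation is that $i_{v,\mathfrak{N}}\colon E(\mathfrak{N})\to\kappa_v^\times$ factors as the composition of the inclusion $E(\mathfrak{N})\hookrightarrow O_F^\times$ with the reduction map $i_v\colon O_F^\times\to\kappa_v^\times$; hence $i_{v,\mathfrak{N}}^*$ is the composition of $i_v^*\colon\mathrm{Hom}(\kappa_v^\times,\mathbb{F}_p)\to\mathrm{Hom}(O_F^\times,\mathbb{F}_p)$ with the restriction $\mathrm{res}\colon\mathrm{Hom}(O_F^\times,\mathbb{F}_p)\to\mathrm{Hom}(E(\mathfrak{N}),\mathbb{F}_p)$. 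Therefore
$$
\sum_{v\in T_1} i_{v,\mathfrak{N}}^*\mathrm{Hom}(\kappa_v^\times,\mathbb{F}_p)
= \mathrm{res}\Big(\sum_{v\in T_1} i_{v}^*\mathrm{Hom}(\kappa_v^\times,\mathbb{F}_p)\Big).
$$

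First I would invoke Lemma \ref{dimension:lemma}: there is a finite $S\subset T_1$ with $\prod_{v\in S}i_v^*$ an isomorphism onto $\mathrm{Hom}(O_F^\times,\mathbb{F}_p)$. In particular $\sum_{v\in T_1}i_v^*\mathrm{Hom}(\kappa_v^\times,\mathbb{F}_p)$ is \emph{all} of $\mathrm{Hom}(O_F^\times,\mathbb{F}_p)$. Combining this with the displayed identity, the subspace defining $t_p$ is exactly $\mathrm{res}(\mathrm{Hom}(O_F^\times,\mathbb{F}_p))$, which by the lemma preceding the last remark has dimension $r_p-\delta_p$. This gives $t_p=r_p-\delta_p$.

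For the final assertion: by the discussion immediately after Theorem \ref{grunwald:wang:thm}, $\Psi$ is an isomorphism if and only if $t_p=r$ (we already know $\Psi$ is injective and that $\dim H^1(Y(\mathfrak N),\mathbb{F}_p)=h_F^+(\mathfrak N)\cdot r$, while $\dim\mathrm{Im}(\Psi)=h_F^+(\mathfrak N)\cdot t_p$ by Propositions \ref{dimension:prop:2} and \ref{dimension:prop}). Substituting $t_p=r_p-\delta_p$, this is equivalent to $r_p-\delta_p=r$, i.e.\ $\delta_p=r_p-r$. I do not anticipate a serious obstacle here; the only point requiring care is the bookkeeping of the two notational uses of $i_{v,\mathfrak{N}}$ (the one on $E(\mathfrak{N})/E(v\mathfrak{N})$ versus the one on $E(\mathfrak{N})$, as flagged after diagram \eqref{cohomology:commutative:diagram}) and confirming that the functor $\mathrm{Hom}(-,\mathbb{F}_p)$ turns the evident factorization of group maps into the claimed factorization of the pullbacks. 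Once the factorization $i_{v,\mathfrak N}=i_v\circ(\text{inclusion})$ is in hand, everything reduces to the two already-proved lemmas and the dimension count.
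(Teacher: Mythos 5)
Your proposal is correct and follows essentially the same route as the paper: factor $i_{v,\mathfrak N}^* = \mathrm{res}\circ i_v^*$, invoke Lemma~\ref{dimension:lemma} to show $\sum_{v\in T_1} i_v^*\mathrm{Hom}(\kappa_v^\times,\mathbb F_p)$ is all of $\mathrm{Hom}(O_F^\times,\mathbb F_p)$, apply the lemma computing $\dim\mathrm{res}(\mathrm{Hom}(O_F^\times,\mathbb F_p))=r_p-\delta_p$, and then deduce the equivalence from the dimension count $\dim\mathrm{Im}(\Psi)=h_F^+(\mathfrak N)\cdot t_p$ and $\dim H^1=h_F^+(\mathfrak N)\cdot r$. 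You spell out more explicitly than the paper the role of the lemma on the restriction map and the iff in the last step, but the argument is the same.
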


\begin{proof}
Let us recall that $t_p$ is the dimension of the following subspace:
$$
\sum_{v\in T_1}i_{v,\mathfrak{N}}^*\mathrm{Hom}(\kappa_v^\times,\mathbb{F}_p)\subset\mathrm{Hom}(E(\mathfrak{N}),\mathbb{F}_p).
$$
Note that the map $i_{v}$ factors through $i_{v,\mathfrak{N}}$ via the inclusion $E(\mathfrak{N})\subset O_F^\times$. Now the assertion of the theorem follows from Lemma \ref{dimension:lemma}.
\end{proof}

\begin{remark}
For the condition $\delta_p=r_p-r$ in Theorem\,\ref{modp:nonvanishing} to hold true, we have either $\delta_p=r_p-r=1$ or $\delta_p=r_p-r=0$. The latter happens exactly when $F$ contains no primitive $p$-th roots of unity and $p \nmid \left|O_F^\times / E(\mathfrak N)\right|$. The former happens exactly when $F$ contains a primitive $p$-th root of unity and $\delta_p=1$
\end{remark}
Since localization is exact, Theorem~\ref{modp:nonvanishing} implies:
\begin{cor}\label{cor:4.10}
Let $\mathfrak{m}$ be a maximal ideal of $\mathbb{T}^0$. If $p$ does not divide $|O_F^\times/E(\mathfrak{N})|$, we have the following isomorphism:
$$
\mathbb{T}^1_\mathfrak{m}\otimes_{\mathbb{T}^0_\mathfrak{m}}H^0(Y(\mathfrak{N}),\mathbb{F}_p)_\mathfrak{m}\cong H^1(Y(\mathfrak{N}),\mathbb{F}_p)_\mathfrak{m}.
$$
\end{cor}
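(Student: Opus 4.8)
The plan is to get Corollary~\ref{cor:4.10} for free from the global statement by localizing, the point being that localization is exact and commutes with tensor products, so it preserves isomorphisms of modules. First I would record that the hypothesis $p\nmid\lvert O_F^\times/E(\mathfrak N)\rvert$ forces $\delta_p=0$ — a finite abelian group with no subgroup of index $p$ admits no nonzero homomorphism to $\mathbb F_p$ — and hence, by the remark following Theorem~\ref{modp:nonvanishing}, $\delta_p=r_p-r$. Theorem~\ref{modp:nonvanishing} (equivalently, Theorem~A) then tells us that the $\mathbb T^0$-linear map
$$\Psi\colon\mathbb T^1\otimes_{\mathbb T^0}H^0(Y(\mathfrak N),\mathbb F_p)\longrightarrow H^1(Y(\mathfrak N),\mathbb F_p)$$
of \eqref{eq:psi} is an isomorphism of $\mathbb T^0$-modules.

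Next I would apply the localization functor $(-)_{\mathfrak m}=\mathbb T^0_{\mathfrak m}\otimes_{\mathbb T^0}(-)$ at the chosen maximal ideal $\mathfrak m\subset\mathbb T^0$. By Proposition~\ref{hecke:algebra:commutative} and Proposition~\ref{Hecke:module:structure}(2), $\mathbb T^0$ is a commutative $\mathbb F_p$-algebra of finite $\mathbb F_p$-dimension, hence Artinian, so $\mathbb T^0=\prod_{\mathfrak m}\mathbb T^0_{\mathfrak m}$ and $(-)_{\mathfrak m}$ is exact — concretely, it is multiplication by the idempotent cutting out the factor $\mathbb T^0_{\mathfrak m}$. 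Being exact, $(-)_{\mathfrak m}$ sends the isomorphism $\Psi$ to an isomorphism
$$\bigl(\mathbb T^1\otimes_{\mathbb T^0}H^0(Y(\mathfrak N),\mathbb F_p)\bigr)_{\mathfrak m}\xrightarrow{\ \sim\ }H^1(Y(\mathfrak N),\mathbb F_p)_{\mathfrak m},$$
and the standard base-change isomorphism $S^{-1}(M\otimes_AN)\cong S^{-1}M\otimes_{S^{-1}A}S^{-1}N$, applied with $A=\mathbb T^0$, $M=\mathbb T^1$, $N=H^0(Y(\mathfrak N),\mathbb F_p)$ and $S=\mathbb T^0\setminus\mathfrak m$, rewrites the source as $\mathbb T^1_{\mathfrak m}\otimes_{\mathbb T^0_{\mathfrak m}}H^0(Y(\mathfrak N),\mathbb F_p)_{\mathfrak m}$. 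This yields the asserted isomorphism.

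I do not expect any genuine obstacle here: all the arithmetic content — the Grunwald--Wang input and the computation $t_p=r_p-\delta_p$ — has already been spent on Theorem~\ref{modp:nonvanishing}, and Corollary~\ref{cor:4.10} is the formal fact that an isomorphism of modules survives an exact base change, together with the compatibility of $\otimes$ with localization. The one bookkeeping point worth stating explicitly is that $(-)_{\mathfrak m}$ respects the $\mathbb T^0$-module structures on $\mathbb T^1$, $H^0$, and $H^1$ throughout; this is automatic because $\Psi$ is a morphism of $\mathbb T^0$-modules and $(-)_{\mathfrak m}$ is a functor on $\mathbb T^0$-modules. If one prefers not to quote Theorem~A, the same argument can be run locally from the outset, since $H^0(Y(\mathfrak N),\mathbb F_p)_{\mathfrak m}$ is still generated over $\mathbb T^0_{\mathfrak m}$ by the image of $1_1$ (Proposition~\ref{Hecke:module:structure}(1)) and the dimension counts of Propositions~\ref{dimension:prop:2} and~\ref{dimension:prop} localize directly.
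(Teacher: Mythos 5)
Your proof is correct and follows essentially the same route as the paper: invoke Theorem~\ref{modp:nonvanishing} (via $\delta_p=0$) to get that $\Psi$ is a $\mathbb T^0$-module isomorphism, then apply the localization functor $(-)_{\mathfrak m}$, using its exactness and its compatibility with tensor products. The paper's proof is just the one-line observation ``Since localization is exact, Theorem~\ref{modp:nonvanishing} implies [the corollary]''; your writeup supplies the same argument with the routine bookkeeping (Artinian decomposition of $\mathbb T^0$, the base-change isomorphism for $\otimes$) spelled out.
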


We conlude this section by deducing our main results. Assume $p$ does not divide the order of $O_F^\times/E(\mathfrak{N})$.
It implie $\delta_p=0$.

\begin{proof}[proof of Theorem A]
Theorem \ref{modp:nonvanishing}, combined with $\delta_p=0$, implies Theorem A.
\end{proof}

\begin{proof}[proof of Theorem B]
Recall that we assumed that $F$ is neither $\mathbb Q$ nor an imaginary quadratic field. In particular, $r_p>0$.
Now Theorem~\ref{spectral:degeneracy} and Corollary~\ref{cor:4.10} imply Theorem B. 
\end{proof}

\section*{Acknowledgements}
\thispagestyle{empty}
This work was supported by the Global-LAMP program of the National Research Foundation of Korea (NRF) grant funded by the Ministry of Education (No. RS-2023-00301976). 
The first named author is supported by the NRF.\footnote{National Research Foundation of Korea, No. 2020R1C1C1A01006819}
The second named author is supported by the Basic Science Research program of the National Research Foundation of Korea (NRF) grant funded by the Ministry of Education (No. RS-2023-00245291).

\bibliographystyle{plain}
\bibliography{ref}

\end{document}